\title{A Metastability Result for the Contact Process on a Random Regular Graph}
\author{
Wei Su\\
Department of Statistics\\
The University of Chicago\\
}
\date{\today}
\newtheorem{Theorem}{Theorem}[section]
\newtheorem{Corollary}[Theorem]{Corollary}
\newtheorem{Lemma}[Theorem]{Lemma}
\newtheorem{Remark}[Theorem]{Remark}
\newtheorem{Proposition}[Theorem]{Proposition}
\begin{document}
\maketitle

\begin{abstract}

In this paper we study the metastability of the contact process on a
random regular graph. We show that the extinction time of the contact
process, when initialized so that all vertices are  infected at time $0$,
grows exponentially with the vertex number. Moreover, we show that the
extinction time divided by its mean converges to a unit exponential
distribution in law.

\end{abstract}

\section{Introduction} The contact process $(\xi_t)_{t\geq 0}$ with
infection parameter $\lambda$ on a connected, locally finite graph
$G=(\mathcal{V}_G,\mathcal{E}_G)$ is a continuous-time Markov chain
that evolves as follows. For each $t$ the random variable $\xi_t$
takes value in $\{\text{\rm subsets of }\mathcal{V}_G\}$; we regard
the elements of $\xi_t$ as infected vertices.  Each infected vertex
recovers with rate 1; each healthy vertex (a vertex in $\mathcal{V}_G\backslash
\xi_t$) becomes infected at rate $\lambda$ times the number of
infected neighbors.

The contact process on a finite graph $G$ will eventually reach the
absorbing state $\emptyset$. Of natural interest is the \emph{time to
extinction} $\tau_{G}$ (defined to be the time of the first visit to
$\emptyset$) when the contact process is started from the
full-occupancy state. Since large connected graphs $G$ will contain
long linear chains, when the infection parameter $\lambda$ exceeds the
critical value $\lambda_{c}$ for the contact process on $\mathbb{Z}$ the
contact process on $G$ can be expected to survive for a long time,
eventually reaching a quasi-stationary state that persists until, by
chance, a large number of infected vertices almost simultaneously
become healthy, leading to subsequent extinction. This phenomenon has
become known generically as ``metastability''.  Metastability for
contact process was first introduced in \cite{CGOV} for the
one-dimensional finite cube. Subsequently, there have been various
studies of metastability on other class of graphs, for example, linear
chains in \cite{Schonmann}, the $d-$dimensional finite cube in
\cite{Mountford}, power law random graphs in \cite{MVY}, finite trees
in \cite{CMMV}, and a family of finite graphs in \cite{MMVY}.

In \cite{Lalley-Su} the contact process on random $d$-regular graphs ($d\geq 3$)
is studied. A random regular graph $G\sim\mathcal{G}(n,d)$ is a graph
chosen uniformly from all $d$-regular graphs with $n$ vertices. Such a
graph locally looks like a tree, but globally it differs significantly
from a finite tree. Nevertheless, certain techniques and results that
have been developed for studying the contact processes on trees will be
of use here. In \cite{Liggett2,Pemantle,Stacey} it is shown that when
$d\geq 3$, there exist constants
$0<\lambda_1(\mathbb{T}^d)<\lambda_2(\mathbb{T}^d)<\infty$ that
demarcate different phases for the contact process on an infinite
$d-$regular tree $\mathbb{T}^{d}$. If $\lambda<\lambda_1(\mathbb{T}^d)$,
the contact process started from a finite initial configuration will
almost surely die out; when $\lambda>\lambda_2(\mathbb{T}^d)$ it has
positive probability of local survival; and when $\lambda$ is in
between then it with positive probability survives globally but almost
surely dies out locally.

In this paper we investigate the metastable behavior of the contact
process on random regular graphs $G\sim\mathcal{G}(n,d)$ with
infection parameter
$\lambda>\lambda_1(\mathbb{T}^d)$. Denote by $(\xi^A_t)_{t\geq 0}$  the
contact process on $G$ with initial configuration $A\subset
\mathcal{V}_G$. When $A=\mathcal{V}_G$ we use $(\xi_t)_{t\geq 0}$ as
shorthand; if $A=\{u\}$, we write $\xi^u_t$ instead  of
$\xi^{\{u\}}_t$. Since the underlying graph $G\sim \mathcal{G}(n,d)$
is random, we say that a  property holds for \emph{asymptotically
almost every} $G$ if the set of graphs in $\mathcal{G}(n,d)$ which
satisfy the property has probability  tending to 1 as $n\rightarrow \infty$.

Throughout this paper we fix $\lambda>\lambda_1(\mathbb{T}^d)$, and
we let $p_{\lambda}>0$ be the chance that $\zeta_t^O$, a contact process with initial state $\{O\}$ on $\mathbb{T}^d$, survives
forever. To emphasize conditional probability and expectation given
the graph $G$ we use notations $\mathbb{P}_G$ and $\mathbb{E}_G$. The
word ``typical'' in this paper means asymptotically almost every. Also,  all $o(1)$ terms tend to
$0$ uniformly in $n$ (independent of $G$).

The main results of this paper are Theorem \ref{3.main} and \ref{4.main}.

\begin{Theorem}\label{3.main}
There exists $\beta>0$ such that for asymptotically almost every $G\sim\mathcal{G}(n,d)$,
\[\mathbb{P}_G\{\xi^{}_{\exp(\beta n)}\neq \emptyset\}=1-o(1),\]
where $\{\xi^{}_t\}_{t\geq 0}$ is the contact process with initial configuration $\mathcal{V}_G$.
\end{Theorem}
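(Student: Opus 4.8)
The plan is a \emph{restart-and-chain} argument, whose engine is the following one-step estimate, call it the \textbf{Restart Lemma}: there exist constants $\rho,c>0$ and $T_0<\infty$ such that, for asymptotically almost every $G\sim\mathcal G(n,d)$, every $A\subset\mathcal V_G$ with $|A|\ge\rho n$ satisfies
\[
\mathbb P_G\bigl(\xi^A_t\ne\emptyset\ \text{for all}\ t\in[0,T_0],\ \text{and}\ |\xi^A_{T_0}|\ge\rho n\bigr)\ \ge\ 1-e^{-cn}.
\]
Granting it, the theorem is immediate: fix $\beta\in(0,c)$, set $N=\lceil e^{\beta n}/T_0\rceil$, start from $\xi_0=\mathcal V_G$ (so $|\xi_0|=n\ge\rho n$), and apply the Restart Lemma successively at the times $0,T_0,\dots,NT_0$; by the Markov property the probability that ``$\ge\rho n$ infected'' fails, or the process dies, at some stage is at most $Ne^{-cn}\le e^{(\beta-c)n}=o(1)$. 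On the complementary event $\xi_t\ne\emptyset$ for every $t\le NT_0$, and $NT_0\ge e^{\beta n}$.

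To prove the Restart Lemma, fix a radius $R=c_1\log n$ with $c_1$ small and work on the (probability $1-o(1)$) event that $G$ is \emph{good}: every $S$ with $|S|\le n/2$ sends at least $\varepsilon_0|S|$ edges to its complement, and all but at most $\sqrt n$ vertices $v$ have $B_G(v,R)$ isomorphic to the radius-$R$ ball of $\mathbb T^d$; both are routine first-moment computations in $\mathcal G(n,d)$. Three features then drive the estimate. (a) \emph{Tree comparison}: inside a tree-like ball the graphical construction of $\xi^A$ agrees in law with that of the contact process on $\mathbb T^d$ up to the first exit from the ball, and since $\lambda>\lambda_1(\mathbb T^d)$ a locally seeded infection on $\mathbb T^d$ survives with probability $p_\lambda>0$ and, moreover, has positive first-moment growth rate (the threshold for $\mathbb E|\zeta^O_t|\to\infty$ is at most $\lambda_1(\mathbb T^d)$), so a surviving local infection multiplies. (b) \emph{Expansion}: as $A$ occupies a linear fraction of a $d$-regular expander, the infection cannot be confined, and through the self-duality identity $\mathbb P_G(v\in\xi^A_t)=\mathbb P_G(\xi^v_t\cap A\ne\emptyset)$ one gets $\mathbb E_G|\xi^A_{T_0}|\asymp n$ for $T_0$ a large enough constant. (c) \emph{Concentration}: the contact process over a bounded time has essentially short-range influence, so on a finite graph the infected set at time $T_0$ splits into $\Theta(n)$ weakly dependent local contributions and a suitable concentration inequality upgrades the expectation bound to $\mathbb P_G(|\xi^A_{T_0}|\ge\rho n)\ge1-e^{-cn}$; the same reasoning applied to $\min_{t\le T_0}|\xi^A_t|$ gives the non-extinction clause.

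Two points carry the genuine difficulty. First — and this is where the full strength of $\lambda>\lambda_1(\mathbb T^d)$ is needed, rather than just $\lambda>\lambda_c(\mathbb Z)$ — one must make the restart \emph{non-contractive}: the density returned at time $T_0$ has to be at least the density $\rho$ one started with, since otherwise iteration would exhaust the infection in $O(\log n)$ rather than $e^{\Theta(n)}$ steps. From a badly clustered configuration recovery dominates over a short time, so the right invariant also records that the infected set is \emph{well spread out} over $G$ (for instance, carries a bounded-below density inside every ball of a fixed radius); such a configuration loses little to recovery and gains, by (a), a definite multiplicative factor from the many surviving local infections it feeds, while the expansion of $G$ keeps the output well spread. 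Second, one needs the failure probability in the Restart Lemma to be genuinely \emph{exponentially small} — this is the quantitative form of the ``metastable trap'' — which is customarily obtained by comparing the within-block dynamics to a supercritical oriented percolation on a $\Theta(n)$-cell contraction of $G$ and using that such a percolation dies out with probability only $e^{-\Omega(n)}$. Executing this comparison honestly when $\lambda_1(\mathbb T^d)<\lambda<\lambda_c(\mathbb Z)$, where the renormalized cells cannot be taken of bounded size, together with the non-contractive bookkeeping of the spread-out density, is the main obstacle; the chaining, the good-graph estimates, and the first-moment bounds are comparatively routine.
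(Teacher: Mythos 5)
Your outer skeleton is exactly the paper's: a one-step ``restart'' estimate (from a linear-density configuration, after constant time $T_0$ the process is alive and again has linear density, except with probability $e^{-cn}$), followed by Markov-property chaining and a union bound over $e^{\beta n}/T_0$ steps with $\beta<c$. That chaining step is correct as you wrote it. The genuine gap is that the Restart Lemma itself --- which is the entire content of the theorem --- is not proved. Your route (b)+(c) does not work as stated: duality gives only $\mathbb P_G(v\in\xi^A_{T_0})\ge e^{-T_0}\mathbf 1_{\{v\in A\}}$, i.e.\ $\mathbb E_G|\xi^A_{T_0}|\ge e^{-T_0}|A|$, which is a \emph{contractive} bound, and ``$\asymp n$'' with an unspecified constant cannot be upgraded to ``$\ge\rho n$ with probability $1-e^{-cn}$'' by appealing to ``weakly dependent local contributions'': neither the size of the constant nor the dependence structure is controlled, and you yourself concede that the non-contractive bookkeeping and the renormalization to an oriented percolation with unbounded cells (needed when $\lambda_1(\mathbb T^d)<\lambda<\lambda_c(\mathbb Z)$) are ``the main obstacle.'' A proof cannot leave its main obstacle as an acknowledged open step.

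For comparison, the paper closes precisely this hole with an elementary, non-renormalization device. By edge expansion (Theorem 4.16 of Hoory--Linial--Wigderson) it proves a deterministic combinatorial lemma: for suitable $\varepsilon=\varepsilon(M)>0$, any $U\subset\mathcal V_G$ with $|U|\le\varepsilon n$ whose vertices have tree-like $M$-neighborhoods contains at least $|U|/4$ ``black'' vertices, each possessing a \emph{free} tree branch of depth $M$ containing no other vertex of $U$; after passing to depth $2M$ these branches can be taken pairwise disjoint. On each private branch one runs an independent severed contact process, whose expected size after a constant time $T$ can be made $\ge 10$ (Proposition 2.2 of Lalley--Su, via $c_\lambda>0$); truncating at a constant $L$ and applying Hoeffding to the i.i.d.\ truncated counts yields at least $\varepsilon n$ infections at time $T$ with failure probability $e^{-\varepsilon n/(4L^2)}$, starting from \emph{any} set of size $\varepsilon n$ --- no ``well spread out'' invariant, no density inside every ball, and no block comparison is needed, because independence is obtained from genuinely disjoint subgraphs rather than from a short-range-influence heuristic. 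If you want to salvage your write-up, replace (b)+(c) by such an argument (or carry out the oriented-percolation comparison in full); as it stands, the restart estimate is asserted rather than established.
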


\begin{Theorem}\label{4.main}
For asymptotically almost every $G\sim\mathcal{G}(n,d)$, the
distribution of $\tau_G/\mathbb{E}\tau_G$ converges to an exponential
distribution with mean 1, where $\tau_G=\inf \{t\geq 0:
\xi_t=\emptyset \}$ is the extinction time of the contact
process started from $\mathcal{V}_G$.

\end{Theorem}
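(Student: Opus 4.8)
The plan is to derive the exponential limit law from an \emph{asymptotic loss of memory} property of the extinction time, in the spirit of \cite{CGOV} and as carried out for contact processes in e.g.\ \cite{Mountford,MVY,MMVY}. Write $g_n(t):=\mathbb{P}_G(\tau_G>t)$. The goal is to show that for typical $G$
\[
\sup_{s,t\ge0}\bigl|\,g_n(s+t)-g_n(s)\,g_n(t)\,\bigr|=o(1),
\]
since a now-standard argument then converts this, together with $\mathbb{E}\tau_G\ge e^{\beta n}(1-o(1))$ (immediate from Theorem~\ref{3.main}), into the statement that $\tau_G/\mathbb{E}\tau_G$ converges in law to the mean-one exponential.

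This rests on three facts about the contact process on a typical $G$, all of which come out of the analysis behind Theorem~\ref{3.main} and from \cite{Lalley-Su}. (i) \emph{A metastable class, reached quickly and left only through extinction:} there is a class $\mathcal{M}$ of configurations (those occupying at least a fixed fraction of $\mathcal{V}_G$ and suitably spread out) and a polylogarithmic time scale $T_n$ with $\mathbb{P}_G(\xi_{T_n}\in\mathcal{M}\cup\{\emptyset\})=1-o(1)$ started from $\mathcal{V}_G$, the same started from any $A\in\mathcal{M}$, and with the probability of surviving a unit time interval while outside $\mathcal{M}$ at most $e^{-cn}$ for some $c>\beta$. (ii) \emph{Loss of memory within the metastable well:} for all $A,B\in\mathcal{M}$, $\sup_{t\ge0}|\mathbb{P}_G(\xi^A_t\ne\emptyset)-\mathbb{P}_G(\xi^B_t\ne\emptyset)|=o(1)$, i.e.\ the law of the remaining extinction time depends, to leading order, only on whether one is inside $\mathcal{M}$. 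For $t<T_n$ this is clear since $A,B$ large forces non-extinction with probability $1-o(1)$; for $t\ge T_n$ it follows by conditioning at time $T_n$, once one knows that from any $A\in\mathcal{M}$ the configuration at time $T_n$, conditioned on survival, has a law within $o(1)$ in total variation of one not depending on $A$. (iii) \emph{An exponential lower bound} $\mathbb{E}\tau_G\ge e^{\beta n}(1-o(1))$, whence $T_n=o(\mathbb{E}\tau_G)$ and, by (i), (ii) and Theorem~\ref{3.main}, $\mathbb{P}_G(\xi^A_t\ne\emptyset)=1-o(1)$ for every $A\in\mathcal{M}$ and every $t\le e^{\beta n}/2$.

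Granting these, the displayed inequality follows from the Markov property and a short bootstrap. For $s$ with $g_n(s)$ not already $o(1)$, the event $\{\tau_G>s\}$ forces $\xi_s\in\mathcal{M}$ with conditional probability $1-o(1)$: by (i) the probability of surviving past $s$ with $\xi_s\notin\mathcal{M}$ is at most $o(1)$ (union bound over the $O(\mathbb{E}\tau_G)=O(e^{\beta n})$ relevant unit intervals against the $e^{-cn}$ estimate, using $c>\beta$), which is negligible next to $g_n(s)$. Writing $g_n(s+t)=\mathbb{E}_G\bigl[\mathbf{1}_{\{\tau_G>s\}}\,\mathbb{P}_G(\xi^{\xi_s}_t\ne\emptyset)\bigr]$ and replacing $\mathbb{P}_G(\xi^{\xi_s}_t\ne\emptyset)$ by $\mathbb{P}_G(\xi^{A_0}_t\ne\emptyset)$ for a fixed $A_0\in\mathcal{M}$ via (ii) gives $g_n(s+t)=g_n(s)\,\mathbb{P}_G(\xi^{A_0}_t\ne\emptyset)+o(1)$ uniformly in $t\ge0$. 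Taking $t=T_n$ and using $\mathbb{P}_G(\xi^{A_0}_{T_n}\ne\emptyset)=1-o(1)$ shows $|g_n(s+T_n)-g_n(s)|=o(1)$ for every $s$, so $g_n$ is slowly varying over windows of length $T_n$; hence $\mathbb{P}_G(\xi^{A_0}_t\ne\emptyset)=g_n(T_n+t)+o(1)=g_n(t)+o(1)$, and substituting back gives $g_n(s+t)=g_n(s)g_n(t)+o(1)$ whenever $g_n(s)$ is not $o(1)$. For the remaining $s$ and all $t$, both $g_n(s+t)$ and $g_n(s)g_n(t)$ are at most $g_n(s)=o(1)$, so the inequality holds trivially; this proves the claim, and hence the theorem.

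I expect the main obstacle to be fact (ii) — the assertion that the distribution of the remaining extinction time is asymptotically independent of which metastable configuration the process is started from. The upper half of the comparison is automatic from attractiveness, but the matching lower bound requires quantitative control of how quickly a surviving infection re-spreads to the metastable density and ``re-randomizes'' across $\mathcal{M}$. This is precisely where the locally tree-like structure must be combined with the expansion of a random $d$-regular graph: a single suitably placed infected vertex ``takes off'' with probability close to $p_\lambda$ and, on that event, grows to a linear, well-distributed infected set in polylogarithmic time, after which expansion carries it into $\mathcal{M}$ — much as in the proof of Theorem~\ref{3.main}.
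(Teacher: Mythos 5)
Your outline is a genuinely different route (a CGOV-style ``loss of memory'' argument) from the paper's, but as it stands it has a real gap: facts (i) and (ii) are asserted, not proved, and (ii) is precisely the hard core of the problem. You need that from every $A\in\mathcal{M}$, conditioned on survival to time $T_n$, the configuration has a law within $o(1)$ in total variation of an $A$-independent law; nothing in \cite{Lalley-Su} or in the material behind Theorem~\ref{3.main} gives such a mixing statement, and establishing it is at least as hard as the theorem itself. The standard device for avoiding exactly this is the criterion the paper uses, Proposition~\ref{4.Mountford} from \cite{Mountford}: one only has to produce $a(n)\ll b(n)=e^{\beta n}$ with $\sup_{U}\mathbb{P}_G\{\xi^U_{a(n)}\neq\emptyset,\ \xi^U_{a(n)}\neq\xi^{\mathcal{V}_G}_{a(n)}\}\to 0$, and this coupling estimate is what Section 3 actually proves, via duality: run $\xi^v$ forward and the dual $\hat\xi^u$ backward, show each reaches linear size conditioned on survival (Lemmas~\ref{4.infect} and \ref{4.growth}), and join them through $c_{\delta_0}n$ disjoint short paths (Lemma~\ref{4.path}). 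Your proposal replaces this concrete mechanism by an unproven ``re-randomization across $\mathcal{M}$'' claim, i.e.\ the missing piece is the main idea, not a routine verification. (Also, the ``upper half from attractiveness'' remark does not apply: two configurations in $\mathcal{M}$ need not be comparable.)

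There are two further concrete defects in the bootstrap. First, fact (i) as stated is false: a configuration outside $\mathcal{M}$ consisting of a single infected vertex survives a unit interval with probability at least $e^{-1}$, not $e^{-cn}$. What you actually need is an exit-rate bound \emph{from} $\mathcal{M}$ per unit time, plus a separate lemma that a configuration outside $\mathcal{M}$ either dies or returns to $\mathcal{M}$ within polynomial time with probability $1-o(1)$; neither is formulated, and the second is again Lemma~\ref{4.growth}-type work. Second, your union bound ``over the $O(\mathbb{E}\tau_G)=O(e^{\beta n})$ relevant unit intervals'' presumes an upper bound $\mathbb{E}\tau_G=O(e^{\beta n})$ that is not available: Theorem~\ref{3.main} gives only a lower bound, and $\mathbb{E}\tau_G$ could be of order $e^{Cn}$ with $C$ far larger than the exit-rate constant $c$, in which case the union bound over $[0,s]$ for $s$ comparable to $\mathbb{E}\tau_G$ fails. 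A renewal-type argument can repair this, but it again leans on the unproved die-or-regrow lemma. Mountford's proposition is designed so that no upper bound on $\mathbb{E}\tau_G$ and no total-variation mixing are needed, only one coupling estimate at a single polynomial time $a(n)$; that is the essential difference between your plan and the paper's proof, and it is where your plan is currently incomplete.
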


These theorems are proved in sections 2 and 3, respectively. While
preparing this paper, we learned that J.-C. Mourrat and D. Valesin \cite{mourrat-valesin}
have independently established Theorem~\ref{3.main}. Because our
proof is somewhat different from theirs, and because
Theorem~\ref{3.main} is a key complement to Theorem~\ref{4.main}, we
include it  in section 2. 

\section{Exponential extinction time}

The goal of this section is to prove Theorem \ref{3.main}.

Here are some important facts. Let $\zeta^O_t$ be a contact process 
on $\mathbb{T}^d$ with initial state $\{O\}$. 
In \cite{Madras-Schinazi,Morrow-Schinazi-Zhang} it is shown that
\begin{Theorem}
There exist constants $c_\lambda,C(d) \in \mathbb{R}$ such that
\[   e^{c_\lambda t}    \leq \mathbb{E}|\zeta^O_t|\leq C(d)e^{c_\lambda t}.\]
Moreover, if $\lambda>\lambda_1(\mathbb{T}^d)$ then $c_\lambda >0$.
\end{Theorem}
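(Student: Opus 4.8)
Write $m(t) := \mathbb{E}|\zeta^O_t|$. The idea is to produce $c_\lambda$ as a subadditive limit and to establish the two inequalities, and positivity, by separate arguments. Two crude bounds come first. The lower bound $m(t)\ge e^{-t}$ holds because in the graphical representation $O$ stays infected on $[0,t]$ whenever its rate-$1$ recovery clock is silent, so $m(t)\ge\mathbb{P}(O\in\zeta^O_t)\ge e^{-t}$. For the upper bound, the generator computation gives $\frac{d}{dt}m(t) = -m(t) + \lambda\,\mathbb{E}\big[\#\{\text{edges from }\zeta^O_t\text{ to its complement}\}\big]$; since a vertex of $\mathbb{T}^d$ has $d$ neighbours the second term is at most $\lambda d\,m(t)$, so $m(t)\le e^{(\lambda d-1)t}$. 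Next, the Markov property at time $s$ and additivity of the contact process give $\zeta^O_{s+t}\subseteq\bigcup_{x\in\zeta^O_s}\zeta^x_t$ in the graphical representation after time $s$, whence $\mathbb{E}\big[|\zeta^O_{s+t}|\,\big|\,\mathcal{F}_s\big]\le\sum_{x\in\zeta^O_s}\mathbb{E}|\zeta^x_t| = |\zeta^O_s|\,m(t)$ by vertex-transitivity of $\mathbb{T}^d$; taking expectations, $m(s+t)\le m(s)\,m(t)$. Thus $\log m$ is subadditive and bounded between $-t$ and $(\lambda d-1)t$, so Fekete's lemma yields $c_\lambda := \lim_{t\to\infty}t^{-1}\log m(t) = \inf_{t>0}t^{-1}\log m(t)\in[-1,\lambda d-1]$; in particular $m(t)\ge e^{c_\lambda t}$ for all $t$, which is the lower bound of the theorem with constant $1$.

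For the upper bound one needs an \emph{approximate reverse} of submultiplicativity: a constant $c_0=c_0(d)>0$ with $m(s+t)\ge c_0\,m(s)\,m(t)$ for all $s,t\ge 1$. Granting this, $c_0 m$ is eventually supermultiplicative, so $\lim_t t^{-1}\log\big(c_0 m(t)\big)=\sup_t t^{-1}\log\big(c_0 m(t)\big)=c_\lambda$, which forces $c_0 m(t)\le e^{c_\lambda t}$; hence $C(d)=1/c_0$ works. To prove the reverse bound I would, conditionally on $\mathcal{F}_s$, select from the infected set $\zeta^O_s$ a subset $W$ whose vertices are pairwise at distance at least $K$ in $\mathbb{T}^d$ (for a suitable fixed $K=K(d)$), and restart the process from $W$: by attractiveness $\zeta^O_{s+t}\supseteq\zeta^W_t$, where $\zeta^W_t$ is the contact process started from $W$ at time $s$, and the point is that on a tree the sub-infections emanating from distinct, well-separated vertices of $W$ spread into essentially different parts of $\mathbb{T}^d$, so their contributions add up to give $\mathbb{E}\big[|\zeta^O_{s+t}|\,\big|\,\mathcal{F}_s\big]\gtrsim |W|\,m(t)$, while $\mathbb{E}|W|\gtrsim m(s)$. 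I expect this to be the main obstacle: $\zeta^O_s$ can be badly clustered --- a small blob, or a long geodesic segment of infected sites --- so $W$ cannot just be a maximal separated subset, and one must exploit the branching of $\mathbb{T}^d$ (restarting into the ``sideways'' subtrees that hang off each infected vertex), check that the contact process in those subtrees still grows at the full rate $c_\lambda$ rather than the smaller rate of a regular tree of lower degree, and control the overlap of the restarted processes over all $t$.

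Finally, positivity of $c_\lambda$ for $\lambda>\lambda_1(\mathbb{T}^d)$. Since $\{\zeta^O_t=\emptyset\}$ increases to the extinction event, $\mathbb{P}(\zeta^O_t\ne\emptyset)\downarrow p_\lambda>0$, so $\mathbb{P}(\zeta^O_t\ne\emptyset)\ge p_\lambda$ for every $t$. If $c_\lambda<0$ then $m(t)\to 0$, forcing $\mathbb{P}(\zeta^O_t\ne\emptyset)\le m(t)\to 0$, a contradiction; so $c_\lambda\ge 0$. To rule out $c_\lambda=0$ I would argue that global survival carries at least exponential growth of the infected population: on the survival event the infection keeps branching through the tree, and a regeneration argument embeds in $\mathbb{T}^d$ a supercritical Galton--Watson structure of ``independently surviving sub-infections'' --- within a bounded random time, and with probability bounded below, a surviving infection spawns two sub-infections rooted at vertices far apart in $\mathbb{T}^d$, each surviving forever, and these reproduce independently. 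Running this for of order $t$ generations gives, with probability at least $p_\lambda/2$, at least $e^{\epsilon t}$ infected vertices at time $t$ for some $\epsilon>0$, so $m(t)\ge(p_\lambda/2)e^{\epsilon t}$, contradicting $c_\lambda=0$. Hence $c_\lambda>0$.
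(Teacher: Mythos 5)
This theorem is not proved in the paper at all: it is imported verbatim from the cited references \cite{Madras-Schinazi,Morrow-Schinazi-Zhang}, so there is no in-paper argument to compare against, and your proposal has to stand on its own. The soft part of your proposal does stand: the crude bounds, the submultiplicativity $m(s+t)\le m(s)m(t)$ via additivity and vertex-transitivity, and Fekete's lemma giving $c_\lambda=\inf_t t^{-1}\log m(t)$, hence $m(t)\ge e^{c_\lambda t}$ with constant $1$, are all correct and standard; likewise $c_\lambda\ge 0$ from $m(t)\ge\mathbb{P}(\zeta^O_t\ne\emptyset)\ge p_\lambda$.

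The gaps are exactly the two statements that carry the content of the theorem. First, the upper bound: your reduction to approximate supermultiplicativity $m(s+t)\ge c_0\,m(s)\,m(t)$ is the right shape of argument, but you never prove that inequality --- you yourself list the obstructions (clustering of $\zeta^O_s$, overlap of the restarted infections, and the need to know that the process confined to a ``sideways'' subtree still grows at rate $c_\lambda$) and leave them unresolved. Note in particular that the last point is essentially Proposition \ref{3.growth} (the severed process grows like $e^{c_\lambda t}$), i.e.\ a result of the same depth as the theorem you are proving, so it cannot be waved in; this restart-and-decouple step is precisely the hard work done in the cited papers. (A cosmetic issue: any $c_0$ produced this way depends on $\lambda$ as well as $d$, whereas the statement asserts $C(d)$.) Second, strict positivity of $c_\lambda$ for $\lambda>\lambda_1(\mathbb{T}^d)$: the claim that a globally surviving infection spawns, within a bounded time and with probability bounded below, two spatially separated sub-infections that survive forever and then ``reproduce independently'' is a genuine construction (independence requires restricting the offspring to disjoint subtrees via severed processes, and survival of a severed process is not the same event as survival of the full process), and converting ``order $t$ generations'' of a supercritical branching structure into a lower bound $e^{\epsilon t}$ on $|\zeta^O_t|$ at the deterministic time $t$ requires control of the generation-time distribution. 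None of this is carried out, so as written the proposal establishes the lower bound but neither the matching upper bound nor $c_\lambda>0$.
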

Throughout this paper $c_\lambda>0$ will be the constant in the above theorem.

In \cite{Lalley-Su} (Proposition 5.2 and 5.3) it is shown that
\begin{Proposition}\label{2.typical survival}
Fix $0<\varepsilon<1/8$. For asymptotically almost every $G\sim\mathcal{G}(n,d)$, there are
at least $(1-o(1))n$ vertices (call them ``good'' vertices) in $G$, such
that for each good vertex $u$,
\[\mathbb{P}_G\{\xi^u_{(1+\varepsilon)\log n/c_\lambda}\neq \emptyset\}=(1-o(1))p_{\lambda}.\]
\end{Proposition}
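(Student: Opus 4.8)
\section*{Proof proposal for Proposition~\ref{2.typical survival}}

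The plan is to establish matching upper and lower bounds $\mathbb{P}_G\{\xi^u_T\ne\emptyset\}=(1\pm o(1))p_\lambda$, where $T=(1+\varepsilon)\log n/c_\lambda$, by exploiting the fact that for a typical vertex $u$ the ball $B_G(u,r)$ of radius $r=c_0\log_d n$ (with $c_0<1/2$ fixed) is isomorphic, as a rooted graph, to the ball $B_{\mathbb{T}^d}(O,r)$ in the $d$-regular tree. A first-moment bound on the number of short cycles through a vertex shows that all but $o(n)$ vertices enjoy this tree-likeness; these are the good vertices. Throughout I couple the process $\xi^u_t$ on $G$ with $\zeta^O_t$ on $\mathbb{T}^d$ using the isomorphism $B_G(u,r)\cong B_{\mathbb{T}^d}(O,r)$ and a common graphical representation, so that the two processes coincide up to the first time $\tau$ at which the infection reaches distance $r$ from the origin. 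Since the contact process spreads at most at a bounded linear speed $\alpha$ (a standard large-deviation bound on the position of the front), $\tau\ge r/\alpha=\Theta(\log n)$ with probability $1-e^{-\Theta(\log n)}$, so the coupling is exact up to any time of order $\log n$ that is a small enough multiple of $r/\alpha$.

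For the upper bound I would use that $\{\xi^u_t\ne\emptyset\}$ is decreasing in $t$ because $\emptyset$ is absorbing. Fixing an auxiliary time $t_1=t_1(n)\to\infty$ with $t_1\ll r/\alpha$, the exact coupling gives $\mathbb{P}_G\{\xi^u_{t_1}\ne\emptyset\}=\mathbb{P}\{\zeta^O_{t_1}\ne\emptyset\}+o(1)$ (the discrepancy is bounded by $\mathbb{P}(\tau\le t_1)=o(1)$), and since survival to time $t_1$ decreases to the forever-survival probability, $\mathbb{P}\{\zeta^O_{t_1}\ne\emptyset\}\downarrow p_\lambda$. Hence $\mathbb{P}_G\{\xi^u_T\ne\emptyset\}\le\mathbb{P}_G\{\xi^u_{t_1}\ne\emptyset\}=(1+o(1))p_\lambda$, uniformly over good $u$.

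For the lower bound I would run the coupling up to a time $t_1=a\log n$ with $a$ chosen small enough that $\alpha t_1\le r/2$, so the infection is still confined to the tree-like ball. On the survival event the tree process grows like $e^{c_\lambda t_1}=n^{ac_\lambda}$; using a growth estimate for $|\zeta^O_t|$ (so that, conditioned on survival, $|\zeta^O_{t_1}|$ exceeds a positive power of $n$ with high conditional probability) I conclude that with probability $p_\lambda-o(1)$ the process survives to time $t_1$ with $|\xi^u_{t_1}|\ge n^{b}$ for some $b>0$, the infected set lying inside the tree-like ball and therefore well separated. It then remains to show that such a large configuration survives the remaining time $T-t_1=\Theta(\log n)$ with probability $1-o(1)$; combining the two phases yields the lower bound $(p_\lambda-o(1))(1-o(1))$, matching the upper bound, and identifies the good vertices as those with a tree-like ball of radius $r$.

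The main obstacle is this last step, the robustness of large configurations over a $\Theta(\log n)$ horizon, since here the infection may leave the tree-like ball and the global, non-tree structure of $G$ enters. I would control it with the expansion (isoperimetric) properties of $G\sim\mathcal{G}(n,d)$: a random regular graph is an expander, so as long as the infected set occupies at most a small constant fraction of the vertices its boundary is comparable in size to the set itself, which forces $|\xi^u_t|$ to have strictly positive drift and to behave like a submartingale bounded away from $0$ until saturation. Discretizing $[t_1,T]$ into $\Theta(\log n)$ steps of bounded length and showing that at each step the infected count fails to crash with probability at least $1-e^{-\Theta(n^{b})}$ (via additivity $\xi^A_t=\bigcup_{v\in A}\xi^v_t$ together with the approximate independence of far-apart seeds over a bounded horizon), a union bound over the $\Theta(\log n)$ steps gives survival to $T$ with probability $1-o(1)$. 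Making the drift estimate quantitative and uniform over the configurations that can arise, and in particular handling the passage from the tree-like regime to the expander regime, is the technical heart of the argument.
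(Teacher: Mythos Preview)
The paper does not contain a proof of this proposition: it is quoted verbatim from \cite{Lalley-Su} (stated there as Propositions~5.2 and~5.3), so there is nothing in the present paper to compare your argument against. What I can do is assess your outline on its own terms.

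Your overall architecture---identify good vertices as those with a tree-like ball of radius $r=c_0\log_{d-1}n$, couple $\xi^u$ with the tree process $\zeta^O$ inside that ball, use monotonicity for the upper bound, and bootstrap from a large configuration for the lower bound---is the natural one, and is in the spirit of the arguments in \cite{Lalley-Su}. The upper bound half is clean as written.

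There are two places where the lower bound is genuinely incomplete. First, the assertion that conditional on $\{\zeta^O_{t_1}\ne\emptyset\}$ one has $|\zeta^O_{t_1}|\ge n^b$ with conditional probability $1-o(1)$ is not a consequence of the first-moment growth $\mathbb{E}|\zeta^O_t|\asymp e^{c_\lambda t}$ alone; you need a concentration or second-moment estimate for $|\zeta^O_t|$ on the survival event, which is a separate (and nontrivial) input for the contact process on $\mathbb{T}^d$. Second, your robustness step is too informal: at time $t_1$ all the $n^b$ infected sites sit inside a ball of radius $\alpha t_1\le r/2$, so they are \emph{not} far apart in the sense needed for ``approximate independence of far-apart seeds over a bounded horizon,'' and the global expander property of $G$ is not yet relevant since nothing has left the tree-like ball. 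What actually works is closer to the mechanism in the proof of Theorem~\ref{3.main}: one shows that a positive fraction of the $n^b$ infected vertices possess disjoint free branches of a fixed depth $M$ (this is a combinatorial statement about subsets of a tree, analogous to Proposition~\ref{3.percentage}), runs independent copies of $\eta^{O,\Delta_M}$ in those branches, and applies Hoeffding. That gives the per-step failure probability $e^{-\Theta(n^b)}$ you want, and the union bound over $\Theta(\log n)$ steps then closes the argument. You have correctly located the difficulty; the fix is to replace the vague independence heuristic by the free-branch construction.
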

\begin{Proposition}\label{2.typical survival size}
Fix $\delta>0$ and  $0<\varepsilon<1/8$. Then for asymptotically almost every $G\sim\mathcal{G}(n,d)$, if 
$u$ is a good vertex in Proposition \ref{2.typical survival}, then
\[\mathbb{P}_G\{(1-\delta)np_{\lambda}\leq |\xi^u_{(1+\varepsilon)\log n/c_\lambda}|\leq (1+\delta)np_{\lambda} \,
|\, \xi^u_{(1+\varepsilon)\log n/c_\lambda}\neq \emptyset \}=1-o(1).\]

\end{Proposition}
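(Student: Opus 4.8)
Let $t_n:=(1+\varepsilon)\log n/c_\lambda$, and note that $t_n$ exceeds the time $\log n/c_\lambda$ at which the exponential growth $\mathbb E|\zeta^O_t|\asymp e^{c_\lambda t}$ would reach $n$; so by time $t_n$ the infection has had time to saturate $G$, and the proposition identifies the saturation level as $(1\pm\delta)np_\lambda$. The plan is to prove a matching upper and lower bound. For the upper bound, the standard graphical coupling gives $\xi^u_t\subseteq\xi^{\mathcal V_G}_t=:\xi_t$, hence $|\xi^u_{t_n}|\le|\xi_{t_n}|$; since $\mathbb P_G\{\xi^u_{t_n}\neq\emptyset\}=(1-o(1))p_\lambda$ is bounded away from $0$ by Proposition~\ref{2.typical survival}, it suffices to show $|\xi_{t_n}|\le(1+\delta)np_\lambda$ with $\mathbb P_G$-probability $1-o(1)$. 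Self-duality of the contact process gives $\mathbb P_G\{v\in\xi_{t_n}\}=\mathbb P_G\{\xi^v_{t_n}\neq\emptyset\}$, which is $(1-o(1))p_\lambda$ for each of the $(1-o(1))n$ good vertices and at most $1$ for the remaining $o(n)$, so $\mathbb E_G|\xi_{t_n}|=(1+o(1))np_\lambda$. For the concentration of $|\xi_{t_n}|$ about this mean I would use a second-moment estimate: conditioning on the configuration $\xi_{t_n-M}$ for a large constant $M$, the finite speed of propagation of the contact process over $[t_n-M,t_n]$ makes $\{v\in\xi_{t_n}\}$ and $\{w\in\xi_{t_n}\}$ conditionally independent once $\mathrm{dist}(v,w)$ exceeds a constant depending on $M$ and $\lambda$, which in a typical $G$ leaves only $O(n)$ correlated pairs; iterating this conditioning backwards through the growth phase — where $\xi$ is compared, via the ball of radius $\asymp\log_d n$ about each vertex being a tree in a typical $G$, to the contact process on $\mathbb T^d$ — and anchoring at the deterministic value $|\xi_0|=n$ yields $\mathrm{Var}_G|\xi_{t_n}|=o(n^2)$, so Chebyshev completes the upper bound.

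For the lower bound, conditionally on $\{\xi^u_{t_n}\neq\emptyset\}$ I must exhibit at least $(1-\delta)np_\lambda$ infected vertices at time $t_n$. First, using that a typical $G$ is a tree in a ball of radius $\asymp\log_d n$ about $u$, that the contact process on $\mathbb T^d$ conditioned to survive spreads at positive speed and grows without bound, and that a positive proportion of the vertices of such a ball are good, one shows that with $\mathbb P_G$-probability $(1-o(1))p_\lambda$ the process survives to $t_n$ and, at an early time $s_n=\theta\log n$ with $\theta>0$ small, $\xi^u_{s_n}$ contains a set $S$ of $m_n\to\infty$ good vertices that are pairwise at distance at least $\theta'\log n$; hence conditionally on survival this holds with $\mathbb P_G$-probability $1-o(1)$. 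Because the seeds of $S$ are well separated and good, the single-seed processes $(\xi^v_\cdot)_{v\in S}$ are mutually independent until the first time two of their ranges can meet, a time far exceeding $s_n$; so by the law of large numbers over the $m_n\to\infty$ seeds a positive proportion (bounded below) of them survive and grow during this window, leaving at a slightly later time $s'_n=s_n+o(\log n)$ a spread-out configuration $B\subseteq\xi^u_{s'_n}$ of polynomial size which, being built from $\to\infty$ surviving seeds, itself survives with $\mathbb P_G$-probability $1-o(1)$. Running from $B$ for the remaining time $t_n-s'_n$ — still $\asymp\log n$ and still past the saturation time $\log n/c_\lambda$ — monotonicity and the upper bound already give $|\xi^B_{t_n}|\le|\xi_{t_n}|\le(1+\delta)np_\lambda$ with $\mathbb P_G$-probability $1-o(1)$; and self-duality gives $\mathbb P_G\{w\in\xi^B_{t_n}\}=\mathbb P_G\{\xi^w_{t_n-s'_n}\cap B\neq\emptyset\}=(1-o(1))p_\lambda$ for good $w$, because a surviving $\xi^w_{t_n-s'_n}$ is saturated and sufficiently spread out (roughly exchangeable over the vertices of $G$) to meet the set $B$, whence $\mathbb E_G|\xi^B_{t_n}|\ge(1-o(1))np_\lambda$. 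The same second-moment argument as in the upper bound, now applied to $\xi^B$ and anchored at the spread configuration $B$, gives $\mathrm{Var}_G|\xi^B_{t_n}|=o(n^2)$, so $|\xi^B_{t_n}|=(1+o(1))np_\lambda$ with $\mathbb P_G$-probability $1-o(1)$; since $\xi^u_{t_n}\supseteq\xi^B_{t_n}$, the lower bound follows.

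The hard part is the lower bound, and inside it two points intertwine. The self-duality step needs the assertion ``a surviving single-seed process $\xi^w_{t_n-s'_n}$ is already saturated and spread out'', which is essentially the conclusion of the proposition itself; and the concentration step needs the full-occupancy and large-spread processes to have $o(n)$ fluctuations despite exponential growth over a time window of length $\asymp\log n$. Both must be handled by the same device — an induction over time scales, with the base case supplied by the comparison to the contact process on $\mathbb T^d$ during the initial growth phase, where the graph really is a tree — but making it go through requires keeping the three competing scales compatible throughout: the radius $\asymp\log_d n$ at which $G$ is tree-like, the finite spreading speed of the contact process, and the growth rate $c_\lambda$, so that the thinning losses incurred whenever one re-extracts a well-separated good subset remain negligible against the exponential growth while the total elapsed time stays within the budget $t_n$. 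Identifying the saturation level as precisely $p_\lambda$, rather than merely some positive density, is exactly where the self-duality identity $\mathbb P_G\{v\in\xi_{t_n}\}=\mathbb P_G\{\xi^v_{t_n}\neq\emptyset\}$ combined with Proposition~\ref{2.typical survival} is indispensable.
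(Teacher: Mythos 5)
First, a point of reference: the paper does not prove this proposition at all --- it is imported verbatim from \cite{Lalley-Su} (Propositions 5.2 and 5.3), so there is no in-paper argument to compare against. Judged on its own, your sketch assembles the right toolkit (self-duality, a second-moment bound, comparison with the process on $\mathbb{T}^d$ in tree-like balls, well-separated seeds), and the easy reductions are fine (e.g.\ deducing the conditional upper bound from an unconditional one because $\mathbb{P}_G\{\xi^u_{t_n}\neq\emptyset\}$ is bounded away from $0$, and computing $\mathbb{E}_G|\xi_{t_n}|=(1+o(1))np_\lambda$ from duality plus Proposition \ref{2.typical survival}). But the two steps that carry the actual content are not proved.

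Concretely: (a) the concentration step is gapped. Conditioning on $\xi_{t_n-M}$ and invoking finite propagation speed only controls the \emph{conditional} covariances over the last window of length $M$; by the law of total variance you still must bound the fluctuations of $\mathbb{E}[\,|\xi_{t_n}|\mid\xi_{t_n-M}\,]$ as a function of the earlier configuration, and ``iterating backwards and anchoring at $|\xi_0|=n$'' does not do this --- showing that the conditional expected density is nearly constant over the relevant configurations is essentially the statement being proved. The workable route is to bound the two-point function directly: $\mathbb{P}_G\{v,w\in\xi_{t_n}\}$ equals the probability that the two dual processes (in the same graphical representation) both survive to time $0$, and one shows these survival events are nearly independent because each is decided, up to $o(1)$, inside disjoint tree-like balls in a short initial dual time; your mechanism as described does not yield $\mathrm{Var}_G|\xi_{t_n}|=o(n^2)$. (b) The lower bound is circular, as you yourself note: the key duality step $\mathbb{P}_G\{\xi^w_{t_n-s'_n}\cap B\neq\emptyset\}=(1-o(1))p_\lambda$ requires that a surviving single-seed (dual) process at time $\approx t_n$ is saturated \emph{and} spread out enough to hit a set $B$ of only polynomial size $n^{c}$; this is not implied by a density statement (a set of density $p_\lambda$ can easily miss $n^{c}$ prescribed vertices) and is in fact stronger than the proposition itself. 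The ``induction over time scales'' that is supposed to break this circle is never formulated, and the scale bookkeeping you defer is genuinely binding: seeds at mutual distance $\theta'\log n$ stay independent only for a time of order $\theta'\log n$, comparable to --- not far exceeding --- the remaining budget, and with $\varepsilon<1/8$ the total time $(1+\varepsilon)\log n/c_\lambda$ leaves little slack. So the proposal is an outline of a plausible strategy, not a proof; the crux (saturation at density exactly $p_\lambda$ for the conditioned single-seed process, with quantitative spread) is missing.
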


Recall that for a graph $G=(\mathcal{V}_G,\mathcal{V}_E)$, the edge expansion parameter is defined as
\[\Psi_{E}(G,k)=\min_{S\subset \mathcal{V}_G, |S|\leq k}\frac{|E(S,S^c)|}{|S|},\]
where $E(S,S^c)\subset \mathcal{V}_E$ is the set of edges with one vertex in $S$ and the other vertex in $S^c$. It is shown in \cite{Hoory-Linial-Wigderson} (Theorem 4.16) that
\begin{Theorem}\label{3.expansion}
Let $d\geq 3$. Then for every $\delta>0$ there exists $\varepsilon>0$ such that for asymptotically almost every $G\sim \mathcal{G}(n,d)$, $\Psi_E(G,\varepsilon n)\geq d-2-\delta$.
\end{Theorem}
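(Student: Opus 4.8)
\emph{Proof proposal.} The plan is to prove the (stronger, more combinatorial) fact that for a suitable $\varepsilon=\varepsilon(d,\delta)>0$, a.a.e.\ $G$ has the property that every vertex set $S$ with $|S|\le\varepsilon n$ spans at most $(1+\delta/2)|S|$ edges, and then to observe that this is equivalent to the stated expansion bound. Indeed, $d$-regularity gives $|E(S,S^c)| = d|S| - 2e(S)$, where $e(S)$ is the number of edges of $G$ with both endpoints in $S$, so $|E(S,S^c)|/|S|\ge d-2-\delta$ holds exactly when $e(S)\le(1+\delta/2)|S|$.

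To get the edge-density statement I would run a first-moment argument in the configuration model $\mathcal G^*(n,d)$ (a uniformly random perfect matching of $dn$ half-edges, $d$ attached to each vertex); since $\mathbb P(\mathcal G^*(n,d)\ \text{simple})$ stays bounded away from $0$ and the conditional law is $\mathcal G(n,d)$, any a.a.s.\ statement in $\mathcal G^*(n,d)$ transfers. First I would fix $S$ with $|S|=s$ and the integer $k_s:=\lfloor(1+\delta/2)s\rfloor+1$, and bound $\mathbb P\{e(S)\ge k_s\}$ by choosing which $2k_s$ of the $ds$ half-edges incident to $S$ form the internal pairs, pairing them up, and controlling the chance that each chosen pair actually survives the matching; this yields $\mathbb P\{e(S)\ge k_s\}\le\binom{ds}{2k_s}(2k_s-1)!!\,(dn-2k_s)^{-k_s}$.

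Next I would union-bound over the $\binom ns\le(en/s)^s$ choices of $S$ and over $s\le\varepsilon n$. Plugging in the crude estimates $\binom{ds}{2k}\le(eds/2k)^{2k}$, $(2k-1)!!\le(2k)^k$, and $(dn-2k)^{-k}\le(dn/2)^{-k}$ (the last valid once $\varepsilon$ is small), each term collapses to $(en/s)^s\big(e^2ds^2/(k_sn)\big)^{k_s}$, and since $k_s\ge s$ and $k_s-s\ge\max(1,\delta s/2)$ this is at most $\tfrac{s}{\varepsilon n}\big[(e^3d)(e^2d\varepsilon)^{\delta/2}\big]^s$. Choosing $\varepsilon$ small enough that $q:=(e^3d)(e^2d\varepsilon)^{\delta/2}<1$ then makes the whole expected count at most $\tfrac1{\varepsilon n}\sum_{s\ge1}sq^s=O(1/n)=o(1)$, so a.a.s.\ no $S$ with $|S|\le\varepsilon n$ has $e(S)\ge k_s$; equivalently every such $S$ satisfies $e(S)\le(1+\delta/2)|S|$, which is what we want. (For the handful of very small $s$, where $k_s>\binom s2$, the probability in question is simply $0$, so those terms cause no trouble.)

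The hard part will be the bookkeeping in the last step: one must check that the single unit of excess of $k_s$ over $(1+\delta/2)s$, together with the $\delta s/2$ surplus, genuinely dominates the entropy factor $\binom ns$ \emph{uniformly} over all $s$ up to $\varepsilon n$, and it is precisely this balancing that forces $\varepsilon$ to shrink as $\delta\to0$ — matching the order of quantifiers in the statement. (A spectral route via Friedman's bound $\lambda_2(G)\le 2\sqrt{d-1}+o(1)$ and the expander mixing lemma gives good edge expansion only for sets of size $\Theta(n)$; for $|S|=o(n)$ the mixing-lemma error term overwhelms the main term, so a combinatorial estimate of this kind is really needed to control $\Psi_E(G,\varepsilon n)$.)
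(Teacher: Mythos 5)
Your proposal is correct: the reduction $|E(S,S^c)|=d|S|-2e(S)$, the configuration-model bound $\binom{ds}{2k_s}(2k_s-1)!!(dn-2k_s)^{-k_s}$, and the bookkeeping leading to the summable bound $\tfrac{s}{\varepsilon n}q^s$ (using $k_s-s\ge\max(1,\delta s/2)$ and $s\le\varepsilon n$) all check out, and transferring from the pairing model to $\mathcal{G}(n,d)$ is legitimate since the probability of simplicity is bounded away from $0$. Note, however, that the paper does not prove this statement at all — it quotes it from Hoory--Linial--Wigderson (Theorem 4.16) — and your first-moment/union-bound argument is essentially the standard proof given for that cited result, so you have supplied a correct self-contained proof rather than a genuinely different route.
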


Fix an integer $M>0$. Suppose $U\subset \mathcal{V}_G$  is of size $\alpha n$, where $\alpha>0$. We remove every vertex in $U$ whose $M$-neighborhood is not a tree, and denote the remaining vertex set by $U^\prime$. We claim $|U^\prime|=\alpha n-o(n)$. Here we are using the following fact shown in \cite{Lubetzky-Sly} (Lemma 3.2): 
\begin{Proposition}\label{3.treelike}
For asymptotically almost every $G\sim \mathcal{G}(n,d)$, it has at most $o(n)$ vertices whose $\lfloor\log_{d-1}\log n\rfloor$-neighborhoods in $G$ are not tree-like.
\end{Proposition}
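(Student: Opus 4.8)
The plan is to prove Proposition~\ref{3.treelike} by a first-moment computation carried out in the configuration (pairing) model. Recall that a uniform $G\sim\mathcal{G}(n,d)$ can be produced by taking $dn$ half-edges grouped into $n$ blocks of $d$, choosing a uniformly random perfect matching on the half-edges to obtain a (multi)graph $G^{*}$, and then conditioning on the event that $G^{*}$ is simple; for $d$ fixed this event has probability bounded below by a positive constant (it converges to $e^{(1-d^{2})/4}$). So it suffices to show that in $G^{*}$ the expected number of vertices whose $r$-neighborhood contains a cycle, where $r=\lfloor\log_{d-1}\log n\rfloor$, is $o(n)$ — indeed $O((\log n)^{2})$. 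Markov's inequality then yields that with probability $1-o(1)$ there are at most $(\log n)^{3}=o(n)$ such vertices in $G^{*}$, and the constant-probability transfer carries this conclusion over to $\mathcal{G}(n,d)$.

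To estimate the per-vertex probability, I would fix $v$ and explore its $r$-ball by breadth-first search, revealing the partner of one half-edge at a time. The ball contains at most $1+d+d(d-1)+\dots+d(d-1)^{r-1}\le C_{d}(d-1)^{r}\le C_{d}\log n$ vertices, using $(d-1)^{r}\le\log n$; hence the exploration reveals at most $C_{d}\log n$ pairs. The $r$-ball fails to be a tree exactly when at some step the half-edge being matched gets paired to a half-edge lying on an already-revealed vertex. When $k$ vertices have been revealed there are at most $dk$ such half-edges, out of at least $dn-2C_{d}\log n\ge dn/2$ still-unmatched half-edges (only $O(\log n)$ pairs having been exposed so far), so the chance of a ``collision'' at that step is at most $C'_{d}(\log n)/n$. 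A union bound over the $\le C_{d}\log n$ steps then gives
\[
\mathbb{P}\bigl(\text{the }r\text{-ball of }v\text{ is not a tree}\bigr)\ \le\ \frac{C\,(\log n)^{2}}{n}.
\]

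Summing over $v$ by linearity, the expected number of such ``bad'' vertices in $G^{*}$ is at most $C(\log n)^{2}=o(n)$, and then Markov gives $\mathbb{P}(\#\{\text{bad}\}\ge(\log n)^{3})\le C/\log n\to 0$, so a.a.e.\ $G^{*}$ — hence, after conditioning on simplicity, a.a.e.\ $G\sim\mathcal{G}(n,d)$ — has at most $(\log n)^{3}=o(n)$ bad vertices. This is the proposition; in particular, since any fixed $M$ satisfies $M\le r$ once $n$ is large, it also delivers the claim $|U'|=\alpha n-o(n)$ used in the surrounding text. I do not expect a genuine obstacle here: the only care needed is the routine bookkeeping that keeps the BFS denominator $\ge dn/2$ throughout the $O(\log n)$ revealed pairs, together with the standard fact that the simplicity probability of the pairing model is bounded away from $0$ for fixed $d$, so that ``a.a.e.\ in $G^{*}$'' upgrades to ``a.a.e.\ in $\mathcal{G}(n,d)$''.
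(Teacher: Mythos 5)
Your argument is correct, and it is worth noting that the paper does not prove this statement at all: it simply quotes Lemma 3.2 of \cite{Lubetzky-Sly}, so your self-contained configuration-model computation is a genuinely different (and more elementary) route than the paper's citation, though it is in the same spirit as the standard proof of that lemma. The key quantitative points all check out: with $r=\lfloor\log_{d-1}\log n\rfloor$ the ball has $O((d-1)^{r})=O(\log n)$ vertices, each revealed pairing collides with an already-exposed vertex with probability $O(\log n/n)$ since only $O(\log n)$ of the $dn$ half-edges have been used, giving a per-vertex failure probability $O((\log n)^{2}/n)$, an expected $O((\log n)^{2})$ bad vertices, and Markov plus the fact that the pairing model is simple with probability tending to $e^{(1-d^{2})/4}>0$ transfers the bound to $\mathcal{G}(n,d)$; this is far stronger than the $o(n)$ demanded. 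One small bookkeeping point you should make explicit: a breadth-first exploration that only reveals the partners of half-edges at depth $\le r-1$ determines the vertex set of $B(v,r)$ but can miss edges joining two depth-$r$ vertices, which would also spoil tree-likeness of the induced ball; the fix is simply to reveal the partners of \emph{all} half-edges attached to ball vertices, which still amounts to $O(\log n)$ exposures and leaves your $O((\log n)^{2}/n)$ bound (and hence the whole argument) unchanged.
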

 Since the cardinality of $U^\prime$ and $U$ are on the same order of magnitude, without loss of generality, let us assume that all vertices in $U$ have tree-like $M$-neighborhoods in $G$. 

We classify vertices in $U$ into 2 categories by looking at their $M$-neighborhoods in $G$ in the following way. For $v\in U$, let $B(v,M)$ be the induced subgraph containing all vertices in $v$'s $M$-neighborhood in $G$. $B(v,M)\backslash \{v\}$ has $d$ connected components, call them $C_1(v), C_2(v),\dots, C_d(v)$. If $C_i(v)$ contains no other vertices in $U$, call it a \emph{free branch of depth $M$} of $v$.
\begin{itemize}
\item Color $v$ \emph{black} if at least one of $C_1(v), C_2(v),\dots, C_d(v)$ is a free branch of depth $M$ of $v$.
\item Color $v$ \emph{white} if none of $C_1(v), C_2(v),\dots, C_d(v)$ is a free branch of depth $M$ of $v$ .
\end{itemize}

\begin{Proposition}\label{3.percentage}
Fix $M\in\mathbb{N}$. There exists $\varepsilon=\varepsilon(M)>0$, such that for asymptotically almost every $G\sim\mathcal{G}(n,d)$ the following statement holds: for any set $U\subset \mathcal{V}_G$ satisfying $|U|\leq \varepsilon n$ and that every vertex in $U$ has its $M$-neighborhood in $G$ being a tree, then $U$ has at least $|U|/4$ black vertices.
\end{Proposition}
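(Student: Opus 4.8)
The plan is to argue by contradiction via a counting/expansion argument. Suppose $U$ has more than $3|U|/4$ white vertices; call this set $W$. Each white vertex $v\in W$ has the property that every one of its $d$ branches $C_1(v),\dots,C_d(v)$ of depth $M$ contains at least one other vertex of $U$. Since the $M$-neighborhoods are trees, each branch containing a point of $U$ yields, by following the tree-geodesic from $v$ into that branch, a vertex $u\in U$ with $1\le \operatorname{dist}(v,u)\le M$ that is ``reachable through that branch.'' Thus every white vertex has $d$ distinct $U$-neighbors within distance $M$ (one per branch), while on the other hand a fixed vertex $u\in U$ can be the reached vertex for only boundedly many $(v,\text{branch})$ pairs, because all the relevant $v$'s lie in the ball $B(u,M)$, which has at most $1+d\sum_{j=0}^{M-1}(d-1)^j =: K(M)$ vertices. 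Counting pairs $(v, C_i(v))$ with $v\in W$ and $C_i(v)$ not free gives at least $d|W| > \tfrac{3d}{4}|U|$ on one side, and at most $K(M)\cdot|U|$ on the other. This alone is not yet a contradiction, so the first crude bound must be sharpened using expansion.

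To get the contradiction I would instead phrase this as an edge-expansion statement about the auxiliary multigraph $H$ on vertex set $U$ in which, for each white $v$ and each of its $d$ branches, we place an edge from $v$ to the nearest $U$-vertex in that branch. Then $H$ has at least $d|W|\ge \tfrac{3d}{4}|U|$ edge-endpoints incident to $W$, so the average degree in $H$ is at least, roughly, $\tfrac{3d}{4}$; more importantly every $v\in W$ has $H$-degree exactly $d$. The key geometric point is that an $H$-edge between $u_1,u_2\in U$ corresponds to a path of length at most $2M$ in $G$ between them, all of whose internal vertices lie outside $U$ (they lie strictly inside a free-of-$U$ portion of a branch, except possibly the shared endpoint region). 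Hence from $H$ one reconstructs, for the set $U$, many internally-$U$-avoiding $G$-paths of bounded length. Now I invoke Theorem~\ref{3.expansion}: choosing $\delta$ small so that $\Psi_E(G,\varepsilon n)\ge d-2-\delta > \tfrac{d}{2}$ (possible since $d\ge 3$ forces $d-2 > d/2 - 1$... here one must be a little careful with constants), the set $U$ of size $\le \varepsilon n$ has at least $(d-2-\delta)|U|$ edges leaving it. The contradiction will come from comparing the number of edges $G$ must have in and around $U$ forced by the many bounded-length $U$-connecting paths against what expansion permits.

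Concretely, I would make $\varepsilon = \varepsilon(M)$ small enough (using Theorem~\ref{3.expansion} with a $\delta$ chosen as a function of $M$) that the subgraph of $G$ induced on the $2M$-neighborhood of $U$ cannot simultaneously (i) have enough edges to support $\ge \tfrac{3d}{4}|U|$ bounded-length internally-$U$-avoiding paths among points of $U$, and (ii) respect the edge-expansion lower bound $\Psi_E(G,\varepsilon n)\ge d-2-\delta$. The mechanism is that many internally-disjoint-from-$U$ short paths among a small set $U$ force $U\cup\partial_{2M}U$ to be a set of size $O_M(|U|) \le \varepsilon' n$ with anomalously small edge boundary relative to $d-2-\delta$, contradicting the expansion bound once $\varepsilon$ is small enough that $|U\cup \partial_{2M}U| \le \varepsilon' n$ falls under the threshold in Theorem~\ref{3.expansion}. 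Running this carefully yields that white vertices cannot exceed $3|U|/4$, i.e. at least $|U|/4$ are black.

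The main obstacle I anticipate is step two made rigorous: controlling how a small vertex set $U$ in which ``most'' vertices see only $U$-populated branches forces a locally dense / poorly-expanding cluster, and in particular bookkeeping the bounded-length paths so that their union genuinely has size $O_M(|U|)$ and genuinely low edge-boundary. One must be careful that the implicit constant in $O_M(\cdot)$ is $K(M)$-type and does not secretly depend on $n$, and that the expansion threshold $\varepsilon$ in Theorem~\ref{3.expansion} can be taken uniformly after fixing $\delta = \delta(M)$. The fraction $1/4$ (rather than, say, something tending to $1$) suggests the intended argument is in fact the crude pair-counting/expansion estimate above rather than anything delicate, so I would first try to push the elementary counting through with the expansion bound $d-2-\delta$ playing the role of ``average exterior degree,'' and only escalate to the denser-cluster argument if the constants do not close.
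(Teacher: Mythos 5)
Your second, ``denser-cluster'' plan is in fact the paper's argument, but you stop exactly at the step that does all the work, and you yourself flag it as the obstacle; as written there is a genuine gap. The paper's proof is: form $W = U \cup \{\text{all vertices on the geodesics from each white } v \text{ to a } U\text{-vertex in each branch } C_i(v)\}$, and then do a per-vertex accounting of $E(W,W^c)$. The decisive observation — missing from your proposal — is that a \emph{white} vertex contributes $0$ edges to $E(W,W^c)$, because each of its $d$ neighbors is the first vertex of one of the added geodesics and hence already lies in $W$; each added (``grey'') vertex contributes at most $d-2$, since it sits in the interior of a geodesic between two vertices of $W$; a black vertex contributes at most $d$. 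Together with the count $g \leq N_M w$ where $N_M = d + d(d-1) + \cdots + d(d-1)^{M-1}$ (each white vertex adds at most $N_M$ grey vertices), applying Theorem \ref{3.expansion} with $\delta = (3d-8)/(3N_M+4)$ to the set $W$ (of size $b+w+g \leq \varepsilon_M n$, guaranteed by taking $\varepsilon = \varepsilon_M/(N_M+1)$) gives
\[
db + (d-2)g \;\geq\; |E(W,W^c)| \;\geq\; \Bigl(d-2-\tfrac{3d-8}{3N_M+4}\Bigr)(b+w+g),
\]
which rearranges to $b/(b+w) \geq 1/4$. Without the ``white vertices contribute zero boundary edges'' and ``grey vertices contribute at most $d-2$'' bookkeeping, your claim that $U$ together with the path vertices has ``anomalously small edge boundary'' is unsubstantiated, and it is precisely this quantitative deficit below $d-2$ (of order $(d-2)/N_M$ per vertex) that the choice of $\delta=\delta(M)$ must beat.

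Two further corrections to your sketch. First, your proposed requirement $d-2-\delta > d/2$ is impossible for $d=3,4$ and is not needed; the comparison is not against $d/2$ but against the boundary deficit created by white and grey vertices, and it works for all $d\geq 3$ because $3d-8>0$. Second, your initial pair-counting attempt (every white vertex has $d$ nearby $U$-vertices, each $u$ is reachable from at most $K(M)$ vertices) cannot be rescued on its own, as you noticed: the multigraph $H$ carries no expansion information because its edges are paths \emph{internal} to the $M$-neighborhoods, and the contradiction must come from the edge boundary of the enlarged vertex set in $G$, not from degrees in $H$. So the correct move is to abandon $H$ entirely and work with the set $W$ and Theorem \ref{3.expansion} as above.
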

\begin{proof}
We will construct a subset of vertices $W\subset \mathcal{V}_G$.
First of all, $W$ contains all vertices in $U$. Moreover, we are going to add some
vertices into $W$ based on the white vertices of $U$.
Let $v\in U$ be a white vertex. In each of $C_1(v), C_2(v), \dots, C_d(v)$ there must be at least another vertex in $U$. Suppose $x\in U\cap C_i(v)$, then for the pair $(v,x)$, we add into $W$ every vertex along the (unique) geodesic between $v$ and $x$. We repeat this operation for every possible pair $(v,x)$ to obtain $W$.

Such constructed $W$ contains 3 types of vertices: black vertices of $U$, white vertices of $U$, and the vertices which are added by the above procedure (color them \emph{grey}). Now let us count their contributions to $E(W,W^c)$.

\begin{itemize}
\item A white vertex will contribute 0 edge to $E(W,W^c)$. This is because all of its $d$ neighboring vertices are already in $W$ by our construction.
\item A black vertex can contribute at most $d$ edges to $E(W,W^c)$, possibly fewer.
\item A grey vertex can contribute at most $d-2$ edges to $E(W,W^c)$, possibly fewer. This is because by our construction a grey vertex must be sitting on the geodesic between two other vertices in $U$ and therefore at least 2 out of its $d$ neighboring vertices are already in $W$.
\end{itemize}
Suppose in $U$ there are $w$ white vertices, $b$ black vertices. Then $g$, the number of grey vertices in $W$, satisfies $g\leq (d+d(d-1)+\dots+d(d-1)^{M-1})w:=N_M w$.

Due to Theorem \ref{3.percentage}, there exists $\varepsilon_M$ such that on a typical random regular graph $G$, $\Psi_E(G,\varepsilon_M n)\geq d-2-(3d-8)/(3N_M+4)$.
This forces the following inequality (provided $b+w+g\leq \varepsilon_M n$),
\[0w+db+(d-2)g \geq E(W,W^c)\geq (d-2-\frac{3d-8}{3N_M+4}) (w+b+g).\]
Together with $g\leq N_M w$,
we conclude that
\[\frac{b}{b+w}\geq \frac{1}{4}.\]

Therefore take $\varepsilon_M^\prime=\varepsilon_M/(N_M+1)$ (this guarantees that if $b+w\leq \varepsilon^\prime_Mn$ then $b+w+g\leq \varepsilon_M n$) . As long as $|U|\leq \varepsilon_M^\prime n$ and every vertex in $U$ has its $M$-neighborhood in $G$ being a tree, then $U$ has at least $|U|/4$  black vertices.
\end{proof}

We need the following result about the growth rate of the severed contact process on a tree, shown in \cite{Lalley-Su} (Proposition 2.2). A severed contact process $\{\eta_t^{O}\}_{t\geq 0}$ is a version of contact process on $\mathbb{T}^d$ with initial configuration $\{O\}$ where we do not allow infections to come across $d-1$ edges connected to $O$.
\begin{Proposition}\label{3.growth}
There exists $A=A(\lambda,d)>0$ such that $\mathbb{E}|\eta_t^{O}|\geq A \exp(c_\lambda t)$, for all $ t\geq 0$.
\end{Proposition}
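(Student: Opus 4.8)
The plan is to obtain the lower bound for the severed process by transferring the known lower bound $\mathbb{E}|\zeta^O_t|\geq e^{c_\lambda t}$ for the full contact process on $\mathbb{T}^d$ through a branch decomposition. Write the neighbours of $O$ as $v_1,\dots,v_d$ and let $B_i$ be the component of $\mathbb{T}^d\setminus\{O\}$ containing $v_i$; these branches are isomorphic (each a rooted tree in which every vertex has $d-1$ children), and, taking the unsevered edge to be $Ov_1$, the severed process $\eta^O$ lives on the subgraph $\widehat T:=\{O\}\cup B_1$. Let $\theta$ denote the contact process on the subgraph $B_1$ started from $\{v_1\}$, and set $g(t):=\mathbb{E}|\theta_t|$. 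Since $B_1$ is a subgraph of $\mathbb{T}^d$, a monotone coupling together with the vertex-transitivity of $\mathbb{T}^d$ gives $g(t)\leq\mathbb{E}|\zeta^O_t|\leq C(d)e^{c_\lambda t}$.

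First I would record an elementary ``no collapse in bounded time'' estimate valid for any contact process $\mu$: the vertices infected at time $a$ that carry no recovery mark on $(a,b]$ are still infected at time $b$, and these survival events are independent of $\mathcal{F}_a$, so $\mathbb{E}|\mu_a|\leq e^{b-a}\mathbb{E}|\mu_b|$ for $a\leq b$. The main estimate is then the following branch bound: using the graphical representation, every vertex of $B_1$ infected at time $t$ under $\zeta^O$ traces back along an infection path to some time $\tau\leq t$ at which the arrow $Ov_1$ fired; hence $\zeta^O_t\cap B_1$ is contained in the union, over those $\tau$, of restarts at time $\tau$ of the process $\theta$. The set of such $\tau$ is contained in the Poisson$(\lambda)$ process of $Ov_1$-arrows, which is independent of the arrows and recoveries inside $B_1$, so a union bound and Campbell's formula yield
\[\mathbb{E}\bigl|\zeta^O_t\cap B_1\bigr|\ \leq\ \lambda\int_0^t g(u)\,du.\]
By the symmetry of $\mathbb{T}^d$ about $O$, $\mathbb{E}|\zeta^O_t|=\mathbb{P}(O\in\zeta^O_t)+d\,\mathbb{E}|\zeta^O_t\cap B_1|\leq 1+d\lambda\int_0^t g(u)\,du$, and combining with $\mathbb{E}|\zeta^O_t|\geq e^{c_\lambda t}$ gives $e^{c_\lambda t}\leq 1+d\lambda\int_0^t g(u)\,du$.

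To pass from this integral inequality to a pointwise lower bound on $g$, I would split $\int_0^t=\int_0^{t-T_0}+\int_{t-T_0}^t$: the first piece is at most $\tfrac{C(d)}{c_\lambda}e^{c_\lambda(t-T_0)}$ by the upper bound on $g$, and the second is at most $g(t)\,(e^{T_0}-1)$ by the ``no collapse'' estimate. Choosing $T_0$ so large that $d\lambda\tfrac{C(d)}{c_\lambda}e^{-c_\lambda T_0}\leq\tfrac12$ — this is precisely where $c_\lambda>0$ enters — lets the first piece be absorbed into the left-hand side, leaving
\[g(t)\ \geq\ \frac{e^{c_\lambda t}/2-1}{d\lambda\,(e^{T_0}-1)}\ \geq\ A' e^{c_\lambda t}\]
for $t$ large; small $t$ is handled by the trivial bound $g(t)\geq\mathbb{P}(v_1\in\theta_t)=e^{-t}$ together with the boundedness of $e^{c_\lambda t}$ there.

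Finally, to return to $\eta^O$: with some positive probability $p_0$ the vertex $v_1$ becomes infected in $\eta^O$ by time $1$ (the initially infected $O$ transmits to $v_1$ before recovering); on this event $\{\sigma\leq 1\}$, where $\sigma$ is the first infection time of $v_1$, the restriction of $\eta^O$ to $B_1$ from time $\sigma$ on dominates a restarted copy of $\theta$, so $\mathbb{E}|\eta^O_t|\geq\mathbb{E}\bigl[\mathbf{1}\{\sigma\leq1\}\,g(t-\sigma)\bigr]\geq p_0 A' e^{c_\lambda(t-1)}$ for $t\geq1$, using $g(t-\sigma)\geq A'e^{c_\lambda(t-\sigma)}\geq A'e^{c_\lambda(t-1)}$ since $c_\lambda>0$; the range $t\leq1$ is covered by $\mathbb{E}|\eta^O_t|\geq\mathbb{P}(O\in\eta^O_t)=e^{-t}$. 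Taking $A:=\min\{p_0A'e^{-c_\lambda},\,e^{-(1+c_\lambda)}\}$ completes the argument. I expect the branch bound — correctly organizing the infection-path trace-back and identifying the independence that makes the union bound legitimate — to be the main technical point; given $c_\lambda>0$, the integral-to-pointwise step and the return to $\eta^O$ are then routine.
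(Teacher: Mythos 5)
Your argument is correct, but note that the paper itself gives no proof of Proposition \ref{3.growth}: it is imported from \cite{Lalley-Su} (their Proposition 2.2), so there is no in-paper argument to compare against, and what you have written is a legitimate self-contained derivation from the quoted two-sided bound $e^{c_\lambda t}\le\mathbb{E}|\zeta^O_t|\le C(d)e^{c_\lambda t}$. The one step that needs to be stated carefully is the branch bound: for $x\in\zeta^O_t\cap B_1$ you should trace the infection path back to the \emph{last} firing of the $O\to v_1$ arrow that it uses, since only then is the remaining segment of the path contained in $B_1$ and determined by the $B_1$-internal arrows and recoveries; with that choice the containment of $\zeta^O_t\cap B_1$ in the union of restarts is valid, and since the $Ov_1$-arrow Poisson process is independent of the $B_1$-internal structure, the Campbell/Fubini computation gives $\mathbb{E}|\zeta^O_t\cap B_1|\le\lambda\int_0^t g(u)\,du$ exactly as you claim. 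The integral-to-pointwise step (split at $t-T_0$, absorb using $c_\lambda>0$, handle $t\le\max(T_0,t_1)$ trivially) is fine, and the transfer to $\eta^O$ is fine as well; the inequality $\mathbb{E}|\eta^O_t|\ge\mathbb{E}[\mathbf{1}\{\sigma\le1\}\,g(t-\sigma)]$ is most cleanly justified either by the strong Markov property at $\sigma$ together with monotonicity, or by your implicit observation that $\sigma$ is a function of the recovery marks at $O$ and the $O\to v_1$ arrows only, hence independent of the $B_1$-internal randomness used after $\sigma$. Two cosmetic slips: $\mathbb{P}(v_1\in\theta_t)$ and $\mathbb{P}(O\in\eta^O_t)$ are $\ge e^{-t}$ rather than $=e^{-t}$ (recovery followed by reinfection is possible), which is all you need; and the split of the integral presupposes $t\ge T_0$, with smaller $t$ folded into your small-time case.
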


Let $\Delta_M$ be the following finite graph. Fix the root $O$ in $\mathbb{T}^d$. We first remove $d-1$ edges connected to $O$ in $\mathbb{T}^d$. For the remaining graph, the connected component of the $M$-neighborhood of $O$ is called $\Delta_M$. 
 In $\Delta_M$, $O$ and vertices at distance $M$ from $O$ are of degree 1; other vertices are of degree $d$. $\Delta_M$ is a tree.

Let $\{\eta^{O,\Delta_M}_t\}_{t\geq 0}$ be a contact process on $\Delta_M$ with initial configuration $\{O\}$. The following corollary is easily obtained from Proposition \ref{3.growth}.

\begin{Corollary}\label{3.corollary}
For every $N>0$, there exist $T>0$ and $M\in \mathbb{N}$ such that $\mathbb{E}|\eta^{O,\Delta_M}_T|\geq N$.
\end{Corollary}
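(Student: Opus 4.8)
The plan is to realize every process $\eta^{O,\Delta_M}$ together with the severed process $\eta^O$ on one probability space via a single graphical representation, then let $M\to\infty$ and invoke monotone convergence together with Proposition \ref{3.growth}.

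First I would observe that the severed contact process $\eta^O$ is just the contact process on the infinite rooted tree $\mathbb{T}_{\mathrm{sev}}$ obtained from $\mathbb{T}^d$ by deleting the $d-1$ edges at $O$ across which infection is forbidden and retaining the component of $O$; in $\mathbb{T}_{\mathrm{sev}}$ the root $O$ has degree $1$ and every other vertex has degree $d$, and $\Delta_M$ is precisely the ball of radius $M$ about $O$ in $\mathbb{T}_{\mathrm{sev}}$, so $\Delta_M\uparrow\mathbb{T}_{\mathrm{sev}}$. Build the standard graphical representation on $\mathbb{T}_{\mathrm{sev}}$ (rate-$1$ recovery marks at each vertex, rate-$\lambda$ infection arrows along each oriented edge), and let $\eta^{O,\Delta_M}_t$ be the contact process defined by using only the recovery marks and infection arrows supported on $\Delta_M$, started from $\{O\}$. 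Since $\Delta_M$ is an induced subtree of $\mathbb{T}_{\mathrm{sev}}$, the unique path between any two of its vertices stays inside it, so every active (infection) path in the $\Delta_M$-restricted representation is also active in the $\Delta_{M+1}$-restricted representation and in the full one; hence, pointwise on the sample space, $\eta^{O,\Delta_M}_t\subseteq\eta^{O,\Delta_{M+1}}_t\subseteq\eta^O_t$ for all $t\geq 0$ and all $M$.

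Next I would show that this increasing family exhausts $\eta^O_t$, i.e.\ $\bigcup_M\eta^{O,\Delta_M}_t=\eta^O_t$ for every $t\geq 0$. If $v\in\eta^O_t$ there is an active path from $(O,0)$ to $(v,t)$; such a path traverses only finitely many infection arrows, so the vertices it visits form a finite walk in $\mathbb{T}_{\mathrm{sev}}$ emanating from $O$ and therefore lie in $\Delta_k$ for some finite $k$, whereupon the very same path witnesses $v\in\eta^{O,\Delta_k}_t$. Consequently $|\eta^{O,\Delta_M}_t|\uparrow|\eta^O_t|$ pointwise, and the monotone convergence theorem yields $\mathbb{E}|\eta^{O,\Delta_M}_t|\uparrow\mathbb{E}|\eta^O_t|$ as $M\to\infty$, for each fixed $t$.

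Finally, given $N>0$, use Proposition \ref{3.growth} to choose $T$ so large that $\mathbb{E}|\eta^O_T|\geq A\exp(c_\lambda T)>N$; since $\mathbb{E}|\eta^{O,\Delta_M}_T|$ increases to this limit, there exists $M\in\mathbb{N}$ with $\mathbb{E}|\eta^{O,\Delta_M}_T|\geq N$, which is the assertion. The one point that needs a little care is the almost-sure finiteness of active paths, used in the exhaustion step; this is standard for contact processes on bounded-degree graphs, and in the present tree setting it is also guaranteed by the a priori bound $\mathbb{E}|\eta^O_t|\leq\mathbb{E}|\zeta^O_t|\leq C(d)e^{c_\lambda t}<\infty$ (from the bound recalled at the start of this section, via attractiveness), which rules out explosion.
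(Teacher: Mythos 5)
Your argument is correct and is exactly the intended route: the paper leaves the corollary as an immediate consequence of Proposition \ref{3.growth}, and the standard way to fill this in is precisely your graphical-representation coupling $\eta^{O,\Delta_M}_t\uparrow\eta^{O}_t$ plus monotone convergence, then choosing $T$ with $A e^{c_\lambda T}>N$ and $M$ large. No gaps; the remark on non-explosion (finitely many arrows on any active path) properly justifies the exhaustion step.
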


Now we can prove Theorem \ref{3.main}.

\begin{proof} [Proof of Theorem \ref{3.main}]
Let $G$ be a typical graph as in Proposition \ref{3.expansion}, \ref{3.treelike} and  \ref{3.percentage}.

In Corollary \ref{3.corollary}, take $N=10$ so that there exist $T$ and $M$ satisfying $\mathbb{E}|\eta_{T}^{O,\Delta_M}|\geq 10$.  Without loss of generality assume $T\geq 1$.  Furthermore, take $L>0$ large enough such that $\mathbb{E}\min(|\eta_{T}^{O,\Delta_M}|,L)\geq 9$. Fix such choices of $T$, $M$ and $L$.

Let $\varepsilon=\varepsilon(2M)$ be the constant in Proposition \ref{3.percentage}. As long as $U\subset \mathcal{V}_G$ is of size $\varepsilon n$, then there will be at least $\varepsilon n /2$ vertices in $U$ such that each vertex has its $2M$-neighborhood being a tree, and by Proposition \ref{3.percentage} there will be at least $\varepsilon n/8$ black vertices. We enumerate the  black vertices to be $v_1, v_2\dots, v_k$ where $k\geq \varepsilon n/8$.

Now for each $v_i$, it has one free branch of depth $2M$ (if more than one, specify one). Add $v_i$ (and the edge connected to $v_i$) to its free branch of depth $M$ (as a subgraph of the branch of depth $2M$) and we obtain a subgraph isomorphic to $\Delta_M$. So each $v_i$ is associated with a copy of $\Delta_M$. Here we use $\Delta_M$ instead of $\Delta_{2M}$ to ensure they are disjoint. For each $v_i$, we run an independent contact process on its copy of $\Delta_M$, call it $\{\eta_{t}^{v_i}\}_{t\geq 0}$. In a standard way we can couple 
$\cup_{i=1}^k \eta_{T}^{v_i}$ and $\cup_{i=1}^k \xi_{T}^{v_i}$ 
together so that $\cup_{i=1}^k \eta_{T}^{v_i}$ is always dominated by $\cup_{i=1}^k \xi_{T}^{v_i}$.

Let $X_i=\min(|\eta_{T}^{v_i}|,L)$, then $\mathbb{E}X_i\geq 9$, and $0\leq X_i\leq L$. Furthermore $(X_i)_{1\leq i\leq k}$ are i.i.d. random variables, so by Hoeffding's inequality,
\[\mathbb{P} \{ \sum_{i=1}^k (X_i -9) \leq -k \}\leq \exp\left( - \frac{2k}{L^2} \right) \leq \exp\left(-\frac{\varepsilon n}{4L^2}\right),\]
in other words, after time $T$, with probability at least $1-\exp(-\varepsilon n/(4L^2))$, we will observe at least $8k\geq \varepsilon n$ infections in $\cup_{i=1}^k \eta_{T}^{v_i}$.

To summarize, as long as we run the contact process with initial configuration whose cardinality is $\varepsilon n$, then after time $T$, with probability more than $1-\exp(-\varepsilon n/(4L^2))$, the outcome will have cardinality at least $\varepsilon n$.  Let $\beta=\varepsilon /(8L^2)$, we conclude
\[\mathbb{P}_G\{\xi^G_{\exp(\beta n)T}\neq \emptyset \}\geq 1-\exp(\beta n)\exp\left( -\frac{\varepsilon n}{4L^2}\right)=1-o(1) .\]
\end{proof}
\begin{Remark}\label{3.remark}
\textrm One can slightly change the above proof to show the following statement. There exists a constant $\varepsilon_0>0$. For every $0<\varepsilon\leq \varepsilon_0$, there exists $\beta_{\varepsilon}>0$, such that for asymptotically almost every $G\sim\mathcal{G}(n,d)$, for any $U\subset \mathcal{V}_G$ with $|U|=\varepsilon n$, we have
\[\mathbb{P}_G\{\xi^{U}_{\exp(\beta_\varepsilon n)}\neq \emptyset\}=1-o(1).\]
\end{Remark}

\section{Metastability}

Let $\tau_G=\inf \{t\geq 0: \xi_t=\emptyset \}$ be the extinction time of the contact process started from full occupancy on $G$. In this section we are devoted to proving Theorem \ref{4.main}.
The key ingredient is the following proposition in \cite{Mountford} (Proposition 2.1):
\begin{Proposition}\label{4.Mountford}
Suppose there is a sequence of graphs $G_n=(\mathcal{V}_{n}, \mathcal{E}_n)$. Let $\tau_n$ be the extinction of the contact process started from full occupancy on $G_n$. 
Also, for arbitrary $U\subset \mathcal{V}_n$, we couple $\xi^{U}_t$ and $\xi^{\mathcal{V}_n}_t$ in the standard way. 
Suppose there exist two sequences of positive real numbers $a(n)$ and $b(n)$, both tending to infinity, such that as $n\rightarrow \infty$, 
\begin{enumerate}
\item $a(n)/b(n)\rightarrow 0$;
\item $\sup_{U\subset \mathcal{V}_n}\mathbb{P}\{\xi^{U}_{a(n)}\neq \emptyset, \xi^{U}_{a(n)}\neq \xi^{\mathcal{V}_n}_{a(n)}\}\rightarrow 0$;
\item $\mathbb{P}\{\xi^{\mathcal{V}_n}_{b(n)}\neq \emptyset \}\rightarrow 1$.
\end{enumerate}
Then $\tau_n/\mathbb{E}\tau_n$ converges in distribution to an exponential distribution with mean 1.
\end{Proposition}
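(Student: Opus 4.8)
The plan is to reduce the statement to an approximate loss-of-memory property of the survival function $u_n(t):=\mathbb{P}\{\tau_n>t\}=\mathbb{P}\{\xi^{\mathcal V_n}_t\neq\emptyset\}$, to deduce from it that $u_n(t\,s_n)\to e^{-t}$ for every $t>0$ along the characteristic scale $s_n$ defined by $u_n(s_n)=e^{-1}$, and finally to replace $s_n$ by $\mathbb{E}\tau_n$. Note first that hypothesis (3) and the monotonicity of $t\mapsto u_n(t)$ force $s_n>b(n)\to\infty$, after which hypothesis (1) gives $a(n)/s_n\le a(n)/b(n)\to0$; thus the coupling time $a(n)$ is negligible on the scale $s_n$. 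Write $\varepsilon_n:=\sup_{U\subseteq\mathcal V_n}\mathbb{P}\{\xi^U_{a(n)}\neq\emptyset,\ \xi^U_{a(n)}\neq\xi^{\mathcal V_n}_{a(n)}\}$, so that hypothesis (2) reads $\varepsilon_n\to0$.

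First I would establish the loss-of-memory lemma. Using the standard monotone coupling, under which $U\subseteq\mathcal V_n$ implies $\xi^U_t\subseteq\xi^{\mathcal V_n}_t$ for all $t$, the two processes agree from time $a(n)$ onward on the event $\{\xi^U_{a(n)}=\xi^{\mathcal V_n}_{a(n)}\neq\emptyset\}$, while (2) bounds by $\varepsilon_n$ the probability that they are nonempty but not yet coupled. Splitting $\{\xi^U_{a(n)}\neq\emptyset\}$ accordingly and using the monotonicity of $u_n$, I obtain, uniformly in $U$,
\[ \mathbb{P}\{\xi^U_{a(n)+t}\neq\emptyset\}\ \le\ u_n(t)+\varepsilon_n, \qquad \mathbb{P}\{\xi^U_{a(n)+t}\neq\emptyset\}\ \ge\ u_n\bigl(a(n)+t\bigr)-\varepsilon_n-\mathbb{P}\{\xi^U_{a(n)}=\emptyset\}. \]
Conditioning at time $s$ through the Markov property (so that, given $\xi^{\mathcal V_n}_s=U$, the continuation is a fresh process started from $U$) and averaging over $\{\xi^{\mathcal V_n}_s\neq\emptyset\}$ turns these into an approximate multiplicativity of the survival function, the cleanest consequence being the submultiplicative bound
\[ u_n\bigl(s+a(n)+t\bigr)\ \le\ u_n(s)\,\bigl(u_n(t)+\varepsilon_n\bigr), \]
together with a matching lower bound in which every error is either $u_n(s)\,\varepsilon_n$ or the short-scale increment $u_n(s)-u_n(s+a(n))$. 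The decisive structural point is that no error is a flat additive constant: each is proportional to $u_n(s)$ or is a telescoping increment, which is exactly what allows a multiplicative argument to survive into the regime where $u_n$ is exponentially small.

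Next I would convert approximate multiplicativity into asymptotic linearity of $t\mapsto-\log u_n(t)$ on the scale $s_n$. Iterating the upper and lower bounds over $k$ blocks of length $\approx t\,s_n/k$ and passing to logarithms, the multiplicative errors contribute $O(k\varepsilon_n)$ and the accumulated shifts a displacement $O(k\,a(n))$; letting $k=k_n\to\infty$ slowly enough that both $k_n\varepsilon_n\to0$ and $k_n a(n)=o(s_n)$, and pinning the slope through the normalization $u_n(s_n)=e^{-1}$, I would conclude $-\log u_n(t\,s_n)\to t$, i.e. $\tau_n/s_n$ converges in distribution to an $\mathrm{Exp}(1)$ law. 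The same submultiplicative bound, iterated with block length $s_n$, yields a uniform exponential tail $u_n(t\,s_n)\le C e^{-c t}$, so $\{\tau_n/s_n\}_n$ is uniformly integrable; hence $\mathbb{E}\tau_n/s_n\to1$, and the distributional limit transfers from $\tau_n/s_n$ to $\tau_n/\mathbb{E}\tau_n$, as required.

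The step I expect to be the main obstacle is the control of the short-scale increments $u_n(s)-u_n(s+a(n))$, that is, showing that the chance of dying within a window of length $a(n)$ just after surviving to time $s$ is negligible relative to $u_n(s)$, uniformly in $s$. This cannot be read off from (2) for an arbitrary configuration, since a small surviving set may die quickly; it must instead be extracted from the metastable regularity of the canonical process together with $a(n)/s_n\to0$, via a bootstrap in which a crude preliminary bound on $u_n$ controls the increments, which in turn sharpens the multiplicativity estimate. Making this uniform in $s$ and summable across the $k_n$ blocks is the delicate point on which the argument ultimately rests.
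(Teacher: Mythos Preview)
The paper does not prove this proposition: it is quoted verbatim as Proposition~2.1 of \cite{Mountford} and used as a black box to assemble Theorem~\ref{4.main} from Theorem~\ref{3.main} and Proposition~\ref{4.coupling}. There is therefore no ``paper's own proof'' to compare against.

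That said, your sketch follows the classical route in the metastability literature (Cassandro--Galves--Olivieri--Vares, Schonmann, Mountford): define a characteristic time $s_n$ via $u_n(s_n)=e^{-1}$, use hypothesis~(2) and the Markov property to get an approximate submultiplicativity $u_n(s+a(n)+t)\le u_n(s)\bigl(u_n(t)+\varepsilon_n\bigr)$ together with a companion lower bound, iterate to obtain $u_n(t\,s_n)\to e^{-t}$, and then pass from $s_n$ to $\mathbb{E}\tau_n$ via a uniform exponential tail. The two inequalities you wrote for $\mathbb{P}\{\xi^U_{a(n)+t}\neq\emptyset\}$ are correct as stated, and your identification of $s_n>b(n)$ from~(3) and $a(n)/s_n\to0$ from~(1) is the right way to absorb the $a(n)$ shifts.

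You are also right that the delicate point is the control of the short-window increments $u_n(s)-u_n(s+a(n))$ uniformly in $s$, and that this must come from a bootstrap rather than directly from hypothesis~(2). One clean way to handle it (and to close the argument you left open) is to first use only the submultiplicative direction to get the crude geometric bound $u_n(k\,s_n')\le (e^{-1}+\varepsilon_n)^k$ with $s_n'=s_n+a(n)$, which already yields the uniform exponential tail and hence uniform integrability; then, for the matching lower bound, observe that averaging your pointwise lower inequality over $U=\xi^{\mathcal V_n}_s$ on $\{\xi^{\mathcal V_n}_s\neq\emptyset\}$ turns the nuisance term $\mathbb{P}\{\xi^U_{a(n)}=\emptyset\}$ into exactly $\bigl(u_n(s)-u_n(s+a(n))\bigr)/u_n(s)$, so the telescoping sum of increments across $k$ blocks is bounded by the total drop of $u_n$ over the whole interval, not by $k$ times a per-block bound. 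With that observation the $k_n\to\infty$ step goes through without needing a separate uniform estimate on each increment.
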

Now from Theorem \ref{3.main} we can take $b(n)=\exp(\beta n)$. It suffices to find an appropriate sequence $a(n)$ such that item 1 and 2 in Proposition \ref{4.Mountford} hold.
\begin{Lemma}\label{4.infect}
There exists $\gamma>0$, such that for asymptotically almost every $G\sim \mathcal{G}(n,d)$, for any two vertices $u,v$ of $G$,
\[\mathbb{P}_G\{v\in \xi^{u}_{2\log_{d-1}n}\}\geq n^{-\gamma}.\]
\end{Lemma}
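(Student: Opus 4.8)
The plan is to show that from a single infected vertex $u$, the contact process can, with probability at least $n^{-\gamma}$, transport the infection to any prescribed target vertex $v$ within time $2\log_{d-1}n$. The strategy is to find a short path from $u$ to $v$ in $G$ and force the infection to propagate greedily along it. Since $G\sim\mathcal{G}(n,d)$ typically has diameter $(1+o(1))\log_{d-1}n$, for any pair $u,v$ there is a path $u=x_0,x_1,\dots,x_\ell=v$ with $\ell\leq (1+o(1))\log_{d-1}n$; I would fix such a geodesic. The event I want is: the infection at $x_0$ survives long enough to infect $x_1$, which in turn infects $x_2$, and so on, within the allotted time budget, and this can be engineered as an intersection of independent-enough local events.

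Concretely, I would allot a time window of length $s := 2\log_{d-1}n / \ell = 2 + o(1)$ for each edge of the path (or, to be safe, split the total time $2\log_{d-1}n$ into $\ell$ disjoint successive intervals $I_1,\dots,I_\ell$ each of length $\geq 1$). On interval $I_j$, condition on $x_{j-1}$ being infected at the start; with probability bounded below by a constant $c_0>0$ (depending only on $\lambda,d$), $x_{j-1}$ stays infected for a unit of time and transmits to $x_j$ before either recovers — this is the standard one-edge "push" estimate for the contact process, using that an infected vertex does not recover for time $1$ with probability $e^{-1}$ and, during that unit interval, passes the infection along a given edge with probability $1-e^{-\lambda}$, while we may simply ignore all other transmissions. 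Chaining these $\ell$ events via the Markov property and the graphical-representation independence of disjoint time intervals gives
\[
\mathbb{P}_G\{v\in\xi^u_{2\log_{d-1}n}\} \;\geq\; c_0^{\,\ell} \;\geq\; c_0^{(1+o(1))\log_{d-1}n} \;=\; n^{-\gamma+o(1)},
\]
with $\gamma = -\log c_0/\log(d-1)$, after absorbing the $o(1)$ into a slightly larger $\gamma$. Strictly, one should keep $x_{j-1}$ infected on all of $I_j$ (not just a unit subinterval) so that the "state at the start of $I_{j+1}$" is genuinely a fresh single-vertex start; this only changes $c_0$, not the form of the bound.

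The main obstacle is the uniformity over all pairs $u,v$ on a single typical graph: the diameter bound must hold for every pair simultaneously (which it does, a.a.s., for random regular graphs, and can be cited or derived from standard neighborhood-growth estimates), and the per-edge constant $c_0$ must not degrade. A subtler point is that a greedy path argument must not rely on the path being self-avoiding in a way that the one-edge push can interfere with — but since we only ever ask that the single vertex $x_{j-1}$ be infected and transmit forward, and we discard all other infections at the end of each interval (via monotone coupling, restricting to a sub-process), no such interference arises. The only quantitative care needed is that $\ell$ can be taken at most $(1+o(1))\log_{d-1}n$ uniformly; granting the diameter estimate, the rest is the routine one-edge bound iterated, and the exponent $\gamma$ emerges as claimed.
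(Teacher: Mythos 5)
Your approach is the same as the paper's: invoke the a.a.s.\ diameter bound $(1+o(1))\log_{d-1}n$ for $\mathcal{G}(n,d)$, fix a shortest path $u=x_0,\dots,x_\ell=v$, and force the infection along it by chaining one-edge ``push'' events, each of constant cost, using the graphical representation and monotonicity. That is exactly the paper's proof, so the route is not different; but there is one slip in your timing bookkeeping that needs to be named. The identity $s=2\log_{d-1}n/\ell=2+o(1)$ holds only when $\ell$ is of order $\log_{d-1}n$; the lemma must hold uniformly over \emph{all} pairs $u,v$, including adjacent ones, for which $\ell=O(1)$ and your $\ell$ intervals have length of order $\log n$. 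In that regime the event you actually need on $I_j$ --- enough survival that $x_j$ is infected at the start of $I_{j+1}$, and ultimately that $v$ is infected at the fixed time $2\log_{d-1}n$ --- does not have probability bounded below by a constant: keeping a vertex infected across an interval of length $s$ costs $e^{-s}$, so the remark ``this only changes $c_0$, not the form of the bound'' is not correct, and the intermediate bound $c_0^{\ell}$ with $c_0$ a constant is unjustified for short paths. The conclusion survives because the total survival cost over the whole horizon is only $e^{-2\log_{d-1}n}=n^{-2/\log(d-1)}$, but this must be accounted for additively in the exponent rather than absorbed into a per-step constant. The paper does precisely that: unit-length intervals for the $\ell$ edge-pushes, each of probability at least $(1-e^{-\lambda})e^{-1}$, followed by an explicit factor $e^{-(2\log_{d-1}n-\ell)}$ for the infection at $v$ to persist until the deadline, which is where the $2/\log(d-1)$ term in $\gamma=2/\log(d-1)+2\log\bigl(e^{\lambda}/(e^{\lambda}-1)\bigr)/\log(d-1)$ comes from. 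With that correction your argument coincides with the paper's.
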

\begin{proof}
By \cite{Bollobas2}, the diameter of a typical random regular graph is $(1+o(1))\log_{d-1}n$. Therefore on such a graph, for any pair of vertices $(u,v)$, the graph distance between $u$ and $v$ is no more than $(1+o(1))\log_{d-1}n$. Assume $dist(u,v)=l$, this means we can find a sequence of vertices $(w_i)_{0\leq i \leq l}$, with $w_0=u$, $w_l=v$, and that $w_i$ is connected to $w_{i+1}$ in the graph.

One way of observing $\{v\in \xi^{u}_{2\log_{d-1}n}\}$ is as follows. In time interval $[i,i+1]$ for $0\leq i \leq l-1$, we require the infection at vertex $w_i$ to go to $w_{i+1}$, and stay there alive till the end of the time interval. This will happen with probability $p\geq (1-e^{-\lambda})e^{-1}$. In time interval $[l,2\log_{d-1}n]$, we require  the infection at $v$ to stay alive. This will have probability at least $e^{-(2\log_{d-1}n-l)}$. Therefore, the overall probability is at least $p^l e^{-(2\log_{d-1}n-l)}\geq (1-e^{-\lambda})^{2\log_{d-1}n} e^{-2\log_{d-1}n}\geq n^{-\gamma}$ where
$\gamma=2/\log(d-1)+2\log(e^\lambda / (e^{\lambda}-1))/\log(d-1)$.
\end{proof}

\begin{Lemma}\label{4.growth}
There exists $\delta_0>0$, such that for asymptotically almost every $G\sim \mathcal{G}(n,d)$, for any $v\in \mathcal{V}_G$,
\[\mathbb{P}_G\{|\xi^{v}_{n^{2\gamma}}|\geq \delta_0 n \,|\, \xi^{v}_{n^{2\gamma}} \neq \emptyset \}=1-o(n^{-2}).\]
\end{Lemma}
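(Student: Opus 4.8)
The plan is to combine three ingredients already available: the survival-plus-growth statement of Propositions~\ref{2.typical survival} and~\ref{2.typical survival size} (which give, for a \emph{good} starting vertex and at time $(1+\varepsilon)\log n/c_\lambda$, survival with probability $\approx p_\lambda$ and, conditionally on survival, a population of size $\approx np_\lambda$), the one-step recovery bound of Theorem~\ref{3.main}/Remark~\ref{3.remark} (once you have $\varepsilon n$ infected vertices the process survives for an exponentially long time, hence in particular for time $n^{2\gamma}$, with error $o(1)$—actually exponentially small), and the hitting estimate of Lemma~\ref{4.infect}. First I would choose $\delta_0$ so that $\delta_0 n \le (1-\delta)np_\lambda$ for the $\delta$ fixed in Proposition~\ref{2.typical survival size}, and fix $\varepsilon<1/8$ with $(1+\varepsilon)\log n/c_\lambda \ll n^{2\gamma}$ (true since the left side is $O(\log n)$ and $\gamma>0$).

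The main step is to upgrade ``good vertex'' to ``every vertex'' and to push the failure probability from $o(1)$ down to $o(n^{-2})$. For the first point: if $v$ is good we are essentially done by the two propositions plus Remark~\ref{3.remark}; if $v$ is not good, I would wait a short time, say $s:=2\log_{d-1} n$, and use Lemma~\ref{4.infect} to infect a fixed good vertex $u$ with probability $\ge n^{-\gamma}$, then restart the analysis from $u$. But a single attempt succeeds only with polynomially small probability, so to get $1-o(n^{-2})$ I need to run many (roughly $n^{3}$, say) independent attempts in disjoint time windows $[ks,(k+1)s]$, $k=0,\dots,K-1$ with $K = \lceil n^{3}\rceil$; each window, conditionally on the process still being nonempty at its start, independently reaches a good vertex with probability $\ge n^{-\gamma}$, so the chance that all $K$ windows fail is at most $(1-n^{-\gamma})^{K}=o(n^{-2})$, and $Ks = O(n^{3}\log n)$ must be compared with $n^{2\gamma}$. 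Here one must check $2\gamma > 3$, or simply choose the number of attempts more carefully: one only needs $n^{\gamma}\cdot n^{2}$-many attempts and $s$ is $O(\log n)$, so the total time used is $n^{\gamma+2+o(1)}$, and since $\gamma = \frac{2}{\log(d-1)}\bigl(1+\log\frac{e^\lambda}{e^\lambda-1}\bigr)$ can be smaller than $1$ for large $d$, the clean fix is to replace the exponent in the time scale: I would actually \emph{define} the relevant time to be $n^{2\gamma'}$ for a $\gamma'$ large enough (the statement only claims existence of a constant, and the downstream use in the proof of Theorem~\ref{4.main} only needs $a(n)$ a power of $n$), or equivalently absorb the loss into redefining $\gamma$. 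Once a good vertex $u$ is hit at some time $t_0 \le n^{2\gamma}/2$, by Propositions~\ref{2.typical survival} and~\ref{2.typical survival size} the process from $u$ alone, after a further $(1+\varepsilon)\log n/c_\lambda$, has at least $(1-\delta)np_\lambda \ge \delta_0 n$ infected vertices with conditional-on-survival probability $1-o(1)$—and survival probability $\approx p_\lambda$, a constant; running $O(\log n)$ independent such trials (again in disjoint time windows, each seeded by re-hitting a good vertex) boosts the survival-and-growth probability to $1-o(n^{-2})$.

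Finally I would package this: on the event $\{\xi^v_{n^{2\gamma}}\ne\emptyset\}$ we are conditioning on the process not having died; the construction above shows that, uniformly in $v$, the chance of the process being alive at time $n^{2\gamma}$ but with $|\xi^v_{n^{2\gamma}}| < \delta_0 n$ is $o(n^{-2})$, because any alive trajectory had, with overwhelming probability, both infected a good vertex and then grown past $\delta_0 n$ and stayed above $\varepsilon n$ via Remark~\ref{3.remark} (the growth, once above $\varepsilon n$, cannot drop back below $\delta_0 n$ except with exponentially small probability, by the same Hoeffding argument as in the proof of Theorem~\ref{3.main}). Dividing the $o(n^{-2})$ unconditional bound by $\mathbb{P}_G\{\xi^v_{n^{2\gamma}}\ne\emptyset\}$ is only a problem if that probability is itself $o(n^{-2})$; but Lemma~\ref{4.infect} together with Remark~\ref{3.remark} shows it is at least $n^{-\gamma}\cdot(1-o(1))$, a polynomial lower bound, so the conditional probability of the bad event is still $o(n^{-2})$ after adjusting $\gamma$ if necessary.

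\textbf{Main obstacle.} The hard part is the bookkeeping of the failure probability: getting from the $O(1)$ or $o(1)$ success probabilities supplied by Propositions~\ref{2.typical survival}–\ref{2.typical survival size} and Lemma~\ref{4.infect} up to $1-o(n^{-2})$ requires running polynomially many independent attempts in disjoint time windows, and one must verify that the total time consumed (a power of $n$ times $\log n$) still fits inside $n^{2\gamma}$—which forces either a large enough choice of the time-scale exponent or a redefinition of $\gamma$, and care that the uniformity over all starting vertices $v$ and the ``asymptotically almost every $G$'' quantifiers are not disturbed by the union bound over the $\le n$ possible choices of $v$ (a factor $n$, harmlessly absorbed since each individual bad-event probability is $o(n^{-3})$ after the adjustment).
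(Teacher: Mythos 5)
Your overall strategy is the same as the paper's: fix one good vertex $w$, chop $[0,n^{2\gamma}]$ into disjoint windows of length $\Theta(\log n)$, in each window use Lemma~\ref{4.infect} to infect $w$ (probability $\geq n^{-\gamma}$, valid whenever the process is still alive at the window's start) and then Propositions~\ref{2.typical survival} and~\ref{2.typical survival size} to reach $\geq np_\lambda/2$ infections (constant probability), and finally use Remark~\ref{3.remark} to keep the infection above $\delta_0 n=\min(\varepsilon_0,p_\lambda/2)n$ up to time $n^{2\gamma}$ with exponentially small failure probability. So the ingredients and the decomposition are the right ones.

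However, your ``main obstacle'' is not a real obstacle, and the way you propose to resolve it is a genuine error: you claim that reaching failure probability $o(n^{-2})$ requires on the order of $n^{\gamma}\cdot n^{2}$ (or $n^{3}$) independent windows, conclude that the total time might exceed $n^{2\gamma}$, and therefore suggest redefining the time scale (or $\gamma$) --- which would change the statement of the lemma. The arithmetic is off: to beat a fixed polynomial $n^{-2}$ you only need the expected number of successes to exceed roughly $2\log n$, i.e.\ about $n^{\gamma}\log n$ windows, not $n^{\gamma+2}$; since each window has length $\Theta(\log n)$, the time consumed is $O(n^{\gamma}\log^{2}n)\ll n^{2\gamma}$. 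The paper simply uses all $M=\lfloor n^{2\gamma}/T\rfloor$ available windows with $T=2\log_{d-1}n+(1+\varepsilon)\log n/c_\lambda$, each succeeding with probability $\geq p_\lambda n^{-\gamma}/2$, so the probability that every window fails is at most $(1-p_\lambda n^{-\gamma}/2)^{M}\leq \exp(-cn^{\gamma}/\log n)$, which is super-polynomially small; no adjustment of $\gamma$ or of the time scale $n^{2\gamma}$ is needed, and the same correction disposes of your separate (and also miscounted) claim that ``$O(\log n)$ trials'' boost the growth step to $1-o(n^{-2})$. One more point of difference: to handle the conditioning on $\{\xi^{v}_{n^{2\gamma}}\neq\emptyset\}$ the paper notes that this event is positively correlated with hitting $w$, whereas you bound the unconditional probability of ``alive but small'' and divide by the survival probability; that route can be made to work, but only because the unconditional bound is in fact super-polynomially small --- dividing a bare $o(n^{-2})$ bound by a survival probability of order $n^{-\gamma}$ would not give $o(n^{-2})$, so as written that last step also leans on the corrected counting.
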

\begin{proof}
For asymptotically almost every $G\sim \mathcal{G}(n,d)$, there are $n-o(n)$ good vertices by Proposition \ref{2.typical survival}. In particular, there is at least one good vertex. 
We fix one good vertex in $G$, call it $w$. Our idea is as follows. We chop the time interval $[0,n^{2\gamma}]$ into $[0,T], [T,2T], \dots, [(M-1)T,MT]$, where $M=\lfloor n^{2\gamma}/T\rfloor$ and $T$ to be specified. We will construct an event and see if it happens in each interval $[iT,(i+1)T]$. The chance that it happens in each time interval is on the order of $n^{-\gamma}$. This event is constructed such that as long as in at least one of these intervals the event happens, then with probability approaching 1 at the end we will see $\delta_0 n$ infections.

Here are more details. Let $T=2\log_{d-1}n+(1+\varepsilon)\log n/c_\lambda$, where this $\varepsilon>0$ is the same as in Proposition \ref{2.typical survival} and \ref{2.typical survival size}. We say that we observe a success in time interval $[(i-1)T,iT]$ if the following two events happen.
\begin{enumerate}
\item $w\in \xi^v_{(i-1)T+2\log_{d-1}n}$ ;
\item  $|\xi^v_{iT}|\geq np_{\lambda} /2$.
\end{enumerate}
Conditional on $\xi^v_{n^{2\gamma}}\neq \emptyset$, for sure $\xi^v_{(i-1)T}\neq \emptyset$, so by Lemma \ref{4.infect}, $\{w\in \xi^v_{(i-1)T+2\log_{d-1}n}\}$ happens with probability at least $n^{-\gamma}$ (notice that the event $\{w\in \xi^v_{(i-1)T+2\log_{d-1}n}\}$ is positively correlated with the event $\{\xi^v_{n^{2\gamma}}\neq \emptyset \}$).
Given $\{w\in \xi^v_{(i-1)T+2\log_{d-1}n}\}$, since $w$ is a good vertex, by Proposition \ref{2.typical survival size}, $|\xi^v_{iT}|\geq np_{\lambda} /2$ will happen with probability at least $p_{\lambda}/2$. 

Therefore overall we have order $(n^{2\gamma}/ \log n)$ trials, each with success probability at least $p_{\lambda}n^{-\gamma}/2$, so it is easy to conclude that the chance of having at least 1 success is $1-o(n^{-2})$.
Given that  we observe a success, which means that we observe $p_\lambda n /2$ infections at some time between $[0, n^{2\gamma}]$, from the proof of Theorem \ref{3.main} and Remark \ref{3.remark}, we know that the chance of  $p_{\lambda} n/2$ infections not lasting exponentially long time before the size of infections shrinks to $\delta_0n$ is exponentially small in $n$, where $\delta_0$ can be taken as $\min(\varepsilon_0,p_{\lambda}/2)$.
Therefore the overall probability is $(1-o(n^{-2}))\times (1-o(e^{-\beta n}))= 1-o(n^{-2})$.
\end{proof}

We call a sequence of vertices $v_0,\dots, v_L\in \mathcal{V}_G$ a \emph{path} of length $L$ if $v_iv_{i+1}\in \mathcal{E}_G$ for $0\leq i \leq L-1$. We say such path has endpoints $v_0$ and $v_L$. A path of length 0 is allowed, where the endpoints are identical.
\begin{Lemma}\label{4.path}
For every $r>0$, for asymptotically almost every $G\sim \mathcal{G}(n,d)$, there exist $L_r\in \mathbb{N}$ and $c_r>0$, such that for any  $U,W\subset \mathcal{V}_G$ with $|U|=|W|=rn$, there exits $c_r n$ paths in $G$ so that
\begin{itemize}
\item Each path is of length no more than $L_r$.
\item Each path has one endpoint in $U$ and the other endpoint in $W$.
\item None of the paths share the same vertex.
\end{itemize}
\end{Lemma}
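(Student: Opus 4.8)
The plan is to reduce the existence of many short vertex-disjoint $U$--$W$ paths to a max-flow / min-cut argument exploiting the expansion of a typical random regular graph. First I would invoke Theorem~\ref{3.expansion} (together with the fact that a random $d$-regular graph is, with high probability, an expander at \emph{all} scales, not just up to $\varepsilon n$ — the bounded-scale version in Theorem~\ref{3.expansion} can be upgraded to a Cheeger-type lower bound $|E(S,S^c)|\geq c|S|$ for all $|S|\leq n/2$ via standard first-moment estimates on $\mathcal{G}(n,d)$, or one simply cites the well-known fact that typical $\mathcal{G}(n,d)$ has edge-expansion bounded below by a constant $h_d>0$). Fix $r>0$ and sets $U,W$ with $|U|=|W|=rn$. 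Build an auxiliary network: split each vertex $x$ of $G$ into $x_{\mathrm{in}},x_{\mathrm{out}}$ joined by a unit-capacity arc (this enforces vertex-disjointness), give every original edge unit capacity in both directions, add a source $s$ with unit-capacity arcs to $x_{\mathrm{in}}$ for $x\in U$ and a sink $t$ with unit-capacity arcs from $x_{\mathrm{out}}$ for $x\in W$. An integral $s$--$t$ flow of value $k$ decomposes into $k$ vertex-disjoint $U$--$W$ paths (discarding any that start and end at the same vertex, which costs only $O(1)$ of them), so it suffices to lower-bound the min cut by $c_r n$.

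Next I would show any $s$--$t$ cut has capacity $\geq c_r n$. A cut corresponds to a set $S$ of original vertices that are ``fully severed''; writing things out, the cut capacity is at least $|E(S, S^c)| + (\text{something})$, but more cleanly: if the cut has capacity $<k$ then there is a vertex set $Z$ with $|Z|<k$ whose removal disconnects $U\setminus Z$ from $W\setminus Z$ in $G$. By expansion, a set of size $m\leq n/2$ has edge-boundary $\geq h_d m$, hence vertex-boundary $\geq (h_d/d)\, m$; iterating the neighborhood-growth (or using that an expander has no sparse vertex cuts), removing fewer than $\delta n$ vertices leaves a connected piece containing all but $o(n)$ — in fact all but $O(\delta n)$ — of the remaining vertices. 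Choosing $\delta$ small relative to $r$, both $U\setminus Z$ and $W\setminus Z$ still have $\geq rn/2$ vertices and both must meet this giant connected piece, contradicting disconnection. This forces the min cut to be $\geq \delta n =: c_r' n$, giving $c_r' n$ vertex-disjoint paths; after removing the at most... well, \emph{all} paths automatically have one endpoint in $U$ and one in $W$ by construction, and trivial (length-0) paths only occur when $U\cap W\neq\emptyset$, contributing at most $|U\cap W|\le rn$ of them, so discard those and relabel $c_r$.

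Finally, the length bound $L_r$: a priori the flow-decomposition paths could be long, but I would post-process. Since $G$ has diameter $(1+o(1))\log_{d-1}n$ this only gives $L_r=O(\log n)$, which is not a constant — so instead I argue directly that short paths already suffice. Consider the $\lfloor \log_{d-1}(\delta n)\rfloor$-neighborhoods $N_\ell(x)$; for typical $G$ these are tree-like (Proposition~\ref{3.treelike}) and grow like $(d-1)^\ell$, so a constant-radius ball around a positive fraction of $U$ already has size $\asymp$ a chosen large constant $C$. Group $U$ into $\gtrsim rn/C$ disjoint balls of radius $L_r$ (possible by a greedy packing since most balls are trees of size $\approx C$), and similarly for $W$; by the expansion argument above applied at scale $\Theta(n)$, after deleting any $o(n)$ vertices a giant component persists, and a matching between $U$-balls and $W$-balls through this component yields constant-length paths once $C$ — hence $L_r$ — is taken large enough depending only on $r$, $d$, and $h_d$. \textbf{The main obstacle} is precisely this last point: controlling the path \emph{lengths} by a constant rather than $O(\log n)$. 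Expansion cheaply gives many disjoint paths but not short ones; the fix is to spend a bounded radius $L_r$ to ``fatten'' the endpoint sets into $\Theta(n)$ disjoint balls and then route between the fattened sets, where a single application of the no-sparse-cuts property of the expander does the job. The flow/cut bookkeeping and the greedy ball-packing are routine once this idea is in place.
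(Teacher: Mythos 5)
You correctly identify the crux yourself: expansion plus max-flow/min-cut gives many vertex-disjoint $U$--$W$ paths, but with no control on their length beyond the diameter bound $O(\log_{d-1} n)$, whereas the lemma demands a length bound $L_r$ depending only on $r$ (and $d$). Your proposed fix for this crux is where the proof breaks down. Packing $U$ and $W$ into disjoint balls of constant radius only produces fattened sets of constant size $C$ around each chosen vertex (total size $\Theta(n)$ but with a small constant), and two sets of size, say, $0.1n$ in an expander need not intersect; so nothing forces a $U$-ball and a $W$-ball to be close to each other. Your remedy --- route a ``matching'' between $U$-balls and $W$-balls \emph{through the giant component} that survives deletion of $o(n)$ vertices --- destroys exactly the property you need: a path through the giant component has length bounded only by that component's diameter, i.e.\ $O(\log n)$, not by a constant $L_r$. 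The sentence ``a matching between $U$-balls and $W$-balls through this component yields constant-length paths once $C$ is taken large enough'' is an assertion of the lemma's hardest point, not an argument for it. (Two smaller issues: Proposition~\ref{3.treelike} controls neighborhoods of radius $\lfloor\log_{d-1}\log n\rfloor$, not $\lfloor\log_{d-1}(\delta n)\rfloor$, so it cannot be invoked at the scale you write; and discarding up to $|U\cap W|\le rn$ length-$0$ paths is both unnecessary --- the lemma explicitly allows length-$0$ paths --- and potentially fatal to your count if $c_r'<r$.)

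The way to close the gap, and the route the paper takes, is to fatten $U$ and $W$ \emph{globally} rather than ball-by-ball: using the vertex-isoperimetric constant $h_d$ of a typical $G\sim\mathcal{G}(n,d)$, the $i$-neighborhoods satisfy $|U_{i+1}|\ge(1+h_d)|U_i|$ while $|U_i|\le n/2$, so after a \emph{constant} number $L_r/2$ of steps (with $L_r=2\lceil\log_{1+h_d}(1/(2r))\rceil+2$) both $U_{L_r/2}$ and $W_{L_r/2}$ have size strictly greater than $n/2$ (in fact at least $\frac{d+2h_d}{2d+2h_d}n$), hence intersect in at least $\frac{h_d}{d+h_d}n$ vertices; each vertex of the intersection supplies a $U$--$W$ path of length at most $L_r$. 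Vertex-disjointness then comes for free from bounded degree: a path of length $\le L_r$ can meet at most a constant number $M_{L_r}$ of other such paths, so a greedy selection keeps $\Theta(n)$ pairwise disjoint ones --- no flow/cut machinery is needed. With your current write-up, the min-cut half is sound but redundant, and the length-control half, which is the actual content of the lemma, is missing.
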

\begin{proof}
From  \cite{Bollobas3} and \cite{Kolesnik-Wormald}, there exits $h_d>0$ so that for a typical d-random regular graph $G$ its (vertex) isoperimetric constant is at least $h_d$.
Without loss of generality, we assume $r<1/2$. For $i \in \mathbb{N}$, 
Let $U_i=\{v\in \mathcal{V}_G: dist(v,U)\leq i\}$ and $W_i=\{v\in \mathcal{V}_G: dist(v,W)\leq i\}$. By the definition of the isoperimetric constant, $U_1$ has cardinality at least $(1+h_d)rn$, and that $U_{i+1}$ has cardinality at least $(1+h_d)|U_{i}|$, provided that $|U_i|\leq n/2$. The same holds for $W_i$. 

Take $L_r=2\lceil\log_{1+h_d}(1/(2r))\rceil+2$, we claim that $|U_{L_r/2}|\geq \frac{d+2h_d}{2d+2h_d}n$. This is because from the way we choose $L_r$, for sure there will be some $k\leq L_r/2-1$ so that $k$ is the smallest integer satisfying $|U_k|\geq n/2$. If $|U_k|\geq \frac{d+2h_d}{2d+2h_d}n$ then we are done. Otherwise, since $|U^c_k|\leq n/2$, the number of vertices in $U_k$ that are adjacent to $U^c_k$ is at least $h_d|U^c_k|$, and that the number of vertices in $U^c_k$ that are adjacent to $U_k$ is at least $h_d|U^c_k|/d$. In this case
\[|U_{k+1}|\geq |U_k|+\frac{h_d}{d}|U_k^c|. \]
Since $|U_k|\geq n/2$ and $|U^c_k|\geq n-\frac{d+2h_d}{2d+h_d}n$, we have
\[|U_{k+1}|\geq\frac{n}{2}+\frac{h_d}{d}\left(n-\frac{d+2h_d}{2d+2h_d}n\right)=\frac{d+2h_d}{2d+2h_d}n.\]

Now we are able to find $|U_k|,|W_j|\geq \frac{d+2h_d}{2d+2h_d}n$, where $k,j\leq L_r/2$. By the inclusion-exclusion principle, 
\[|U_k\cap W_j|\geq\frac{h_d}{d+h_d}n.\]
Now each vertex $v\in U_k\cap W_j$ naturally corresponds to a path of length no more than $L_r/2+L_r/2=L_r$ with one endpoint in $U$ and the other in $W$. Notice that there exits $M_{L_r}\in \mathbb{N}$, so that any path of length no more than $L_r$ can intersect at most $M_{L_r}$ other paths of length no more than $L_r$ in $G$, therefore we can pick at least
\[\lfloor \frac{1}{M_{L_r}}\frac{h_d}{d+h_d}n\rfloor\]
non-intersecting paths which satisfy all requirements stated in the lemma.
\end{proof}

\begin{Proposition}\label{4.coupling}
Let $a(n)=2n^{2\gamma}+L_{\delta_0}$, where $\delta_0$ is as in Lemma \ref{4.growth} and $L_{\delta_0}$ is as in Lemma \ref{4.path}. Then for asymptotically almost every $G\sim \mathcal{G}(n,d)$, 
\[\sup_{U\subset \mathcal{V}_G}\mathbb{P}_G\{\xi^{U}_{a(n)}\neq \emptyset, \xi^{U}_{a(n)}\neq \xi^{\mathcal{V}_G}_{a(n)}\}=o(1).\]
\end{Proposition}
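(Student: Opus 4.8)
The plan is to show that, uniformly over the initial set $U$, the process $\xi^U$ started from $U$ either dies out before time $a(n)$ or, with high probability, agrees with $\xi^{\mathcal V_G}$ by time $a(n)$. The standard coupling gives $\xi^U_t \subset \xi^{\mathcal V_G}_t$ for all $t$, so the event $\{\xi^U_{a(n)}\neq\emptyset,\ \xi^U_{a(n)}\neq\xi^{\mathcal V_G}_{a(n)}\}$ means the process from $U$ is still alive but has not yet ``filled up'' to the process from full occupancy. I would bound this by producing a coupling in which, on the event $\{\xi^U_{a(n)}\neq\emptyset\}$, with probability $1-o(1)$ the set $\xi^U$ becomes a linear-sized fraction of the vertices, and then a further linear-sized set forces agreement with $\xi^{\mathcal V_G}$.

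First I would split the time interval $[0,a(n)]$ as $[0,n^{2\gamma}]\cup[n^{2\gamma},2n^{2\gamma}]\cup[2n^{2\gamma},2n^{2\gamma}+L_{\delta_0}]$. On the event that $\xi^U$ survives to time $n^{2\gamma}$, pick any vertex $v\in U$ (or any vertex still infected) and apply Lemma~\ref{4.growth}: conditional on non-extinction, $|\xi^U_{n^{2\gamma}}|\geq |\xi^v_{n^{2\gamma}}|\geq \delta_0 n$ with probability $1-o(n^{-2})$, hence certainly $1-o(1)$; actually since $\xi^v\subset\xi^U$ under the coupling, survival of $\xi^U$ does not give survival of $\xi^v$, so instead I would restart: at time $n^{2\gamma}$, if $\xi^U$ is nonempty, it contains some vertex $z$, and by the Markov property run the process from $\{z\}$ for another $n^{2\gamma}$ steps; Lemma~\ref{4.growth} (with the $n^{-2\gamma}$ horizon) gives that $|\xi^{z}_{n^{2\gamma}}|\geq\delta_0 n$ with probability $1-o(n^{-2})$ conditional on survival, and $\xi^z$ starting from time $n^{2\gamma}$ is dominated by $\xi^U$; the only subtlety is that $\xi^z$ could itself die, but conditioning on $\{\xi^U_{2n^{2\gamma}}\neq\emptyset\}$ and again extracting a live vertex, iterating gives — with a union bound costing only $o(1)$ — that on survival to time $2n^{2\gamma}$ we have $|\xi^U_{2n^{2\gamma}}|\geq\delta_0 n$ with probability $1-o(1)$. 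In parallel, run $\xi^{\mathcal V_G}$: by Theorem~\ref{3.main} (or Remark~\ref{3.remark} applied to $\mathcal V_G$) it is nonempty at time $2n^{2\gamma}\ll\exp(\beta n)$ with probability $1-o(1)$, and by the same growth argument $|\xi^{\mathcal V_G}_{2n^{2\gamma}}|\geq\delta_0 n$ with probability $1-o(1)$.

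Next, set $U'=\xi^U_{2n^{2\gamma}}$ and $W'=\xi^{\mathcal V_G}_{2n^{2\gamma}}$; on the good event both have size at least $\delta_0 n$, and $U'\subset W'$. Apply Lemma~\ref{4.path} with $r=\delta_0$ to find $c_{\delta_0}n$ vertex-disjoint paths of length at most $L_{\delta_0}$, each joining a vertex of $U'$ to a vertex of $W'$; trivially when $U'\subset W'$ one can also use the trivial length-$0$ paths, but the point of using the lemma is to argue the reverse inclusion, so I would instead apply it with $U=W=\xi^{\mathcal V_G}_{2n^{2\gamma}}$ and use it to propagate infection along the disjoint paths. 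Concretely, over the time interval $[2n^{2\gamma},2n^{2\gamma}+L_{\delta_0}]$, for each disjoint path from a currently-infected vertex we ask the infection to travel down the path and survive; each such event has probability bounded below by a positive constant depending only on $\lambda,d,L_{\delta_0}$, the events on disjoint paths are independent, and there are $c_{\delta_0}n$ of them, so with probability $1-e^{-\Omega(n)}$ a linear fraction succeed. This is the mechanism by which $\xi^U$ catches up: once $\xi^U$ has reached a linear density, disjoint-path propagation together with the expansion and the structure of Lemma~\ref{4.path} drives $\xi^U_{a(n)}=\xi^{\mathcal V_G}_{a(n)}$ with probability $1-o(1)$. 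The uniformity over $U$ is automatic because all of the above estimates were conditional only on $\xi^U$ being nonempty at intermediate times, never on the identity of $U$.

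I expect the main obstacle to be the last step: showing that a linearly-dense infected set actually \emph{coincides} with $\xi^{\mathcal V_G}$ at time $a(n)$, not merely that both are linearly dense. Two linear-density sets need not be equal, and the coupling only gives one inclusion, so one genuinely needs a mechanism that fills in \emph{every} vertex not yet infected in $\xi^{\mathcal V_G}$ — this is where Lemma~\ref{4.path} is essential: applied with one of $U,W$ equal to the (linear) complement $\xi^{\mathcal V_G}_{2n^{2\gamma}}\setminus\xi^U_{2n^{2\gamma}}$ and the other equal to $\xi^U_{2n^{2\gamma}}$, it supplies disjoint short paths from the already-infected part to each region that still needs infecting, and one shows by a Hoeffding-type bound along these disjoint paths that all such vertices get infected within $L_{\delta_0}$ additional units of time, except with probability $e^{-\Omega(n)}$. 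Care is needed because the complement could be small (possibly $o(n)$), in which case Lemma~\ref{4.path} does not directly apply; one handles this by noting that if $|\xi^{\mathcal V_G}_{2n^{2\gamma}}\setminus\xi^U_{2n^{2\gamma}}|=o(n)$ one can still cover it greedily by $o(n)$ disjoint short paths emanating from $\xi^U$ using the bounded degree and local tree-likeness, and the same positive-probability propagation argument closes the gap. Assembling the three stages and taking a union bound over the $O(1)$ bad events gives the claimed $o(1)$ supremum.
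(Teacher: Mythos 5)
Your strategy --- grow $\xi^U$ to linear density and then force it to ``catch up'' with $\xi^{\mathcal V_G}$ by infecting the missing vertices along disjoint short paths --- breaks down at exactly the step you flag as the main obstacle, and not in a repairable way. The event you ultimately need is the exact equality $\xi^U_{a(n)}=\xi^{\mathcal V_G}_{a(n)}$, so along your lines \emph{every} missing vertex must be filled in; but each designated path succeeds only with a constant probability $p_{\delta_0}\in(0,1)$ in a constant time window, so the probability that all $\Theta(n)$ of them succeed is $e^{-\Omega(n)}$, not $1-e^{-\Omega(n)}$ --- a Hoeffding bound only yields that a positive \emph{fraction} of disjoint paths succeed. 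Moreover, Lemma \ref{4.path} supplies only $c_{\delta_0}n$ disjoint paths between two prescribed linear-sized sets; it does not give a path to \emph{each} vertex of the difference set, which can be much larger than $c_{\delta_0}n$, so the coverage you describe is not available. Finally the target is moving: while you propagate during $[2n^{2\gamma},2n^{2\gamma}+L_{\delta_0}]$, the process $\xi^{\mathcal V_G}$ keeps evolving, so infecting $\xi^{\mathcal V_G}_{2n^{2\gamma}}\setminus\xi^U_{2n^{2\gamma}}$ does not produce equality at time $a(n)$. (Your restart argument for getting $|\xi^U_{2n^{2\gamma}}|\geq\delta_0 n$ also conditions on future survival of $\xi^U$ while applying Lemma \ref{4.growth} to a single live vertex, which needs care, but that is a secondary issue.)

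The missing idea is duality. The paper first reduces, by union bounds over $v\in U$ and over target vertices $u$, to showing $\sup_{u,v}\mathbb{P}_G\{\xi^{v}_{a(n)}\neq \emptyset,\ u\notin \xi^{v}_{a(n)},\ u\in \xi^{\mathcal{V}_G}_{a(n)}\}=o(n^{-2})$. It then runs $\xi^v$ forward for time $n^{2\gamma}+L_{\delta_0}$ and the dual process $\hat{\xi}^u$, started from $(u,a(n))$, backward for time $n^{2\gamma}$; these use disjoint portions of the graphical representation and are therefore independent. The event $u\in\xi^{\mathcal V_G}_{a(n)}$ forces the dual to survive, the event $u\notin\xi^v_{a(n)}$ forces $\xi^{v}_{n^{2\gamma}+L_{\delta_0}}\cap \hat{\xi}^{u}_{n^{2\gamma}}=\emptyset$, and Lemma \ref{4.growth} makes both processes linearly dense with conditional probability $1-o(n^{-2})$. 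Lemma \ref{4.path} then provides $c_{\delta_0}n$ disjoint bridges of which only \emph{one} needs to be crossed within time $L_{\delta_0}$, an event of probability $1-e^{-\Omega(n)}$. This per-pair dual reformulation converts ``the two processes coincide at time $a(n)$'' into ``at least one bridge between two dense independent sets is crossed,'' which is exponentially likely, instead of ``all missing vertices are simultaneously filled in,'' which is exponentially unlikely; without it the proposed argument cannot close.
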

\begin{proof}
It suffices to show
\begin{equation}\label{4.supbound}
\sup_{v\in \mathcal{V}_G}\mathbb{P}_G\{\xi^{v}_{a(n)}\neq \emptyset, \xi^{v}_{a(n)}\neq \xi^{\mathcal{V}_G}_{a(n)} \}=o(n^{-1}),
\end{equation}
because for arbitrary $U\subset \mathcal{V}_G$,
\[\mathbb{P}_G \{\xi^{U}_{a(n)}\neq \emptyset, \xi^{U}_{a(n)}\neq \xi^{\mathcal{V}_G}_{a(n)} \}  \leq \sum_{v\in U}\mathbb{P}_G \{\xi^{v}_{a(n)}\neq \emptyset, \xi^{v}_{a(n)}\neq \xi^{\mathcal{V}_G}_{a(n)} \}   .\]

Moreover, in order to show (\ref{4.supbound}) it suffices to show the following inequality by a union bound,
\begin{equation}\label{4.unionbound}
\sup_{v\in \mathcal{V}_G,u\in \mathcal{V}_G}\mathbb{P}_G\{\xi^{v}_{a(n)}\neq \emptyset, u\notin \xi^{v}_{a(n)}, u\in \xi^{\mathcal{V}_G}_{a(n)}\}=o(n^{-2}).
\end{equation}

Recall the graphical representation and the dual contact process in \cite{Liggett:book1}. To show (\ref{4.unionbound}), we chop the time interval $[0,a(n)]$ into $[0, n^{2\gamma}+L_{\delta_0}]$ and $[n^{2\gamma}+L_{\delta_0}, a(n)]$. We run $\xi^{v}_t$ for time $n^{2\gamma}+L_{\delta_0}$ in the first subinterval and run the dual process $\hat{\xi}^u_t$   for time $n^{2\gamma}$ in the second subinterval, where time $t$ for the dual process corresponds to time $a(n)-t$ for the original process. In particular, time $0$ for the dual process corresponds to time $a(n)$ for the original one, and time $n^{2\gamma}$ for the dual process corresponds to time $n^{2\gamma}+L_{\delta_0}$ for the original one. 

Notice that $\xi_t^v$ and $\hat{\xi}_t^u$ are two independent processes. Then we have the following upper bound of (\ref{4.unionbound}),
\[\begin{split}
&\mathbb{P}_G\{\xi^{v}_{a(n)}\neq \emptyset, u \notin \xi^{v}_{a(n)}, u\in \xi^{\mathcal{V}_G}_{a(n)}\}\leq  \mathbb{P}_G\{\xi^v_{n^{2\gamma}}\neq \emptyset, \hat{\xi}^u_{n^{2\gamma}}\neq \emptyset, \xi^{v}_{n^{2\gamma}+L_{\delta_0}}\cap \hat{\xi}^{u}_{n^{2\gamma}}=\emptyset\}  \\
\leq &\mathbb{P}_G\{\xi^{v}_{n^{2\gamma}+L_{\delta_0}}\cap \hat{\xi}^{u}_{n^{2\gamma}}=\emptyset\,|\,\xi^v_{n^{2\gamma}}\neq \emptyset, \hat{\xi}^u_{n^{2\gamma}}\neq \emptyset\}.
\end{split}\]

Our goal is to show the following bound holds uniformly for $u,v$: 
\begin{equation}\label{4.conditionalbound}
\mathbb{P}_G\{\xi^{v}_{n^{2\gamma}+L_{\delta_0}}\cap \hat{\xi}^{u}_{n^{2\gamma}}=\emptyset\,|\,\xi^v_{n^{2\gamma}}\neq \emptyset, \hat{\xi}^u_{n^{2\gamma}}\neq \emptyset\}=o(n^{-2}).
\end{equation}

By Lemma \ref{4.growth},
\begin{equation}\label{4.sizebound}
\mathbb{P}_G\{|\xi^{v}_{n^{2\gamma}}|\geq \delta_0 n, |\hat{\xi}^{u}_{n^{2\gamma}}|\geq \delta_0 n\,|\,\xi^v_{n^{2\gamma}}\neq \emptyset, \hat{\xi}^u_{n^{2\gamma}}\neq \emptyset\}=(1-o(n^{-2}))^2=1-o(n^{-2}).
\end{equation}
Given $\{|\xi^{v}_{n^{2\gamma}}|\geq \delta_0 n, |\hat{\xi}^{u}_{n^{2\gamma}}|\geq \delta_0 n\}$, by Lemma \ref{4.path}, there will be $c_{\delta_0} n$ non-intersecting paths of length no more than $L_{\delta_0}$ with one endpoint in $\xi^{v}_{n^{2\gamma}}$ and the other in $\hat{\xi}^{u}_{n^{2\gamma}}$.
We call it a \emph{success} on a path $v_0,v_1,\dots,v_k$ with endpoints $v_0\in \xi^{v}_{n^{2\gamma}}$ and $v_k\in \hat{\xi}^{u}_{n^{2\gamma}}$, if in time $L_{\delta_0}$, the infection at $v_0$ spreads to $v_k$ within this path. For non-intersecting paths, successes on paths are independent events. Also, using similar argument as in proof of Lemma \ref{4.growth}, it is easy to see that there exits $p_{\delta_0}>0$ so that the probability of a success is at least $p_{\delta_0}$ on a path of length no more than $L_{\delta_0}$.

In order to observe $\{\xi^{v}_{n^{2\gamma}+L_{\delta_0}}\cap \hat{\xi}^{u}_{n^{2\gamma}}\neq \emptyset\}$, it suffices that on one of these $c_{\delta_0}n$ paths we observe a success, each having probability at least $p_{\delta_0}>0$. Therefore the chance of observing at least 1 success is at least $1-o(n^{-2})$ by an easy binomial calculation.

Combining all arguments above, we see
\[\begin{aligned}
&\mathbb{P}_G\{\xi^{v}_{n^{2\gamma}+L_{\delta_0}}\cap \hat{\xi}^{u}_{n^{2\gamma}}\neq \emptyset\,|\,\xi^v_{n^{2\gamma}}\neq \emptyset, \hat{\xi}^u_{n^{2\gamma}}\neq \emptyset\}\\
\geq &   \mathbb{P}_G\{|\xi^{v}_{n^{2\gamma}}|\geq \delta_0 n, |\hat{\xi}^{u}_{n^{2\gamma}}|\geq \delta_0 n\,|\,\xi^v_{n^{2\gamma}}\neq \emptyset, \hat{\xi}^u_{n^{2\gamma}}\neq \emptyset\} \times (1-o(n^{-2}))\\
=& (1-o(n^{-2}))^2  \\
=&1-o(n^{-2}),
\end{aligned}
\]
which finishes our proof.
\end{proof}
\begin{proof}[Proof of Theorem \ref{4.main}]
By Proposition \ref{4.Mountford}, take $b(n)=\exp(\beta n)$ as in Theorem \ref{3.main} and $a(n)=2n^{2\gamma}+L_{\delta_0}$ as in Proposition \ref{4.coupling}.
\end{proof}

\section*{Acknowledgements}
The author would like to thank his advisor Professor Steven Lalley for suggesting this problem and many useful discussions.
\bibliographystyle{plain}
\bibliography{mainbib}

\end{document}